\def\nc{\newcommand}
\newcommand{\N}{\mathbb{N}}
\newcommand{\R}{\mathbb{R}}
\newcommand{\C}{\mathbb{C}}
\newcommand{\lam}{\lambda}
\newcommand{\eps}{\epsilon}
\newcommand{\Zp}{\mathcal Z}
\newcommand{\calA}{\mathcal A}
\newcommand{\all}[2]{ \left \{\, {#1} \, : \, {#2} \, \right \} }
\newcommand{\set}[1]{{\{#1\}}}
\newcommand{\cl}{\rm cl}
\newcommand{\Ho}{{\mathcal H}}
\newcommand{\htop}{h_{top}}
\newcommand{\Hol}{{\rm H\ddot{o}l}}
\DeclareMathOperator*{\var}{var}
\newcommand{\abs}[1]{\left | {#1}\right | }
\newcommand{\norm}[2]{\left\| {#1}\right\| _{#2}}
\newcommand{\tnorm}[1]{%
  {\left\vert\kern-0.25ex\left\vert\kern-0.25ex\left\vert {#1} 
   \right\vert\kern-0.25ex\right\vert\kern-0.25ex\right\vert}}
\newcommand{\wnorm}[1]{\abs{#1}_w}
\newcommand{\esup}[2]{\abs{#1}_{#2,\infty}}
\newcommand{\dist}{{\rm dist}}
\newcommand{\ncircle}[1]{\text{\raisebox{.5pt}{\textcircled{\raisebox{-.9pt} {#1}}}}}
\newcommand{\tr}{\mathcal L}
\theoremstyle{plain}
\newtheorem{theorem}{Theorem}[section]
\newtheorem{proposition}[theorem]{Proposition}
\newtheorem{lemma}[theorem]{Lemma}
\newtheorem{corollary}[theorem]{Corollary}
  \newtheoremstyle{TheoremNum}
        {}             %
        {}              %
        {\itshape}                      %
        {}                              %
        {\bfseries}                     %
        {.}                             %
        { }                             %
        {\thmname{#1}\thmnote{ \bfseries #3}}%
   \theoremstyle{TheoremNum}
\theoremstyle{definition}
\newtheorem{definition}[theorem]{Definition}
\newtheorem{remark}[theorem]{Remark}
\newtheoremstyle{remarkbreak}%
  {}%
  {}%
  {\upshape}%
  {}%
  {\bfseries}%
  {.}%
  {\newline}%
  {}%
\theoremstyle{remarkbreak}
\newcommand{\rr}{\rightarrow}
\def\mystrut  {\rule[0ex]{0ex}{1.55ex}{}}
\newcommand{\myvar}[1]{\var_{\mystrut #1}}
\def\ds{\displaystyle}
\DeclareMathOperator*{\Card}{card}
\def\calD {{\mathcal D}}
\def\calG {{\mathcal G}}
\def\hatcalD {\widehat{\mathcal D}}
\nc{\dual}[1]{\langle #1 \rangle}
\def\rr{\rightarrow}
\def\bfone{{\mathbf 1}}
\def\htop {h_{\rm top}}
\nc{\Tr}{\mbox{\rm Tr}}
\nc{\mytrace}[1]{\Tr \left( \sstrut #1 \right)}
\nc{\mat}[2]  {\left(  \! \begin{array}{#1} #2 \end{array}\! \right)}
\begin{document}

\title {Entropy-continuity for interval maps with holes}

\author[O.F.~Bandtlow]{Oscar F.~Bandtlow}

\address{Oscar F.~Bandtlow\\
School of Mathematical Sciences\\
Queen Mary University of London\\
London E3 4NS, UK\\
Email: o.bandtlow@qmul.ac.uk}

\author[H.H. Rugh]{Hans Henrik Rugh}
\address{Hans Henrik Rugh\\
Laboratoire de Math\'ematiques d'Orsay, 
Universit\'e Paris-Sud, CNRS, 
Universit\'e Paris-Saclay\\
91405 Orsay Cedex, France.
Email:  hans-henrik.rugh@math.u-psud.fr}

\date{July 31, 2016}

\begin{abstract} 
We study the dependence of the topological entropy of 
piecewise monotonic maps with holes under 
perturbations, for example 
sliding a hole of fixed size at uniform speed or expanding a hole at a uniform rate. 
We show that under suitable conditions the 
topological entropy varies locally H\"older continuously with the 
local H\"older 
exponent depending itself on the 
value of the topological entropy. 
\end{abstract}

\maketitle

\section{Introduction and main result}

Given a piecewise monotonic map $T$ of an interval $I$, one measure
of the `complexity' of the map is the topological entropy
$\htop(T)$, given by the exponential growth rate of
the number of monotonicity intervals of iterates of the map.
It turns out that varying the map may produce discontinuities of the topological entropy.
As a simple, yet illustrative example, consider  for $0\leq a \leq 1$,
the scaled Farey map $T_a:[0,1]\rr [0,1]$ given by 
\[ T_a(x)=
 \begin{cases}
 a\frac{x}{1-x} & \text{for $x\in [0,1/2]$},\\
 a\frac{1-x}{x} & \text{for $x\in (1/2,1]$}.
 \end{cases}
 \]

It is not difficult to see that if 
$0\leq a \leq \frac12$ then 
$\htop(T_a)=0$. For $\frac12 < a \leq 1$ the map has a closed invariant set on which it is topologically conjugate to the tent map, so  $\htop(T_a)=\log 2$. Thus, 
as $a$ passes a half, the sudden appearance of a full tent map produces a 
discontinuity of the topological entropy. On the other hand, it is known that for certain one-parameter families of smooth maps the topological entropy is even H\"older continuous (see, for example, \cite{guckenheimer}). 

A somewhat milder way of changing the map is to fix the map itself
but to change the interval of definition, that is to introduce `holes'
in the map. In dynamical terms, if a point in the domain of definition hits the hole under iteration of the map, it is not 
allowed to evolve any further. The motivational idea is a physical system for which mass leaks through an 
opening in phase space. 

This set-up has received considerable attention over the past decades (see \cite{BBF} 
for a snapshot of recent developments). Most of the efforts have been directed to 
studying the escape rate of mass through the hole as well as 
showing the existence of conditionally invariant measures with good statistical properties for various classes of maps, 
as well as conditions on the hole, typically that the hole is either 
part of a Markov partition for the map (see, for example, \cite{PY,CMS,CMa}) 
or that the hole is `small' (see, for example, \cite{CMaT,LM,BC,BrDM,DemWY,DW}). 
In a similar vein, the behaviour of the escape rate as the size of the hole shrinks to zero has also been studied in some detail (see, for example, 
\cite{BY, KL09, FP, dettmann, CKD}). 

In this article, we shall focus on the regularity of the topological entropy $\htop(T,H)$ 
of a fixed piecewise monotonic expanding interval map $T:I\to I$ as a function of the hole $H$, without any assumptions on the size or position of $H$. 
Roughly speaking we shall show that if $\mathcal S$ is a neighbourhood of $t\in \R$ and 
$(H_s)_{s\in \mathcal S}$ is a family of holes varying sufficiently regularly in the space of holes, then,  
under a mild expansivity condition on $T$, the function $s\mapsto \htop(T,H_s)$ is continuous at $t$. 
Moreover, if $\htop(T,H_t)>0$, then $s\mapsto \htop(T,H_s)$ is even H\"older continuous at $t$ 
with H\"older exponent at $t$ depending on $\htop(T,H_t)$. 

In order to provide a more precise formulation of these results, we require some more notation. 
\begin{definition} \label{def:map}
Let $I$ be a compact interval and let $\Zp$ be a finite collection of disjoint open subintervals of $I$.
We say that
$T:\bigcup \Zp \to I$ is a  \emph{piecewise monotonic $C^1$-map} with  \emph{initial partition $\Zp$} if 
for each  $Z\in \Zp$ the restriction $T_{|Z}$ of $T$ to $Z$ is strictly monotonic and extends as 
a $C^1$-function to the closure $\cl(Z)$ of $Z$.
\end{definition} 

Suppose now that we are given a piecewise monotonic $C^1$-map $T$ with initial partition $\Zp$.
For $n\in \N$ we write
\[  \Zp_n=\all{Z_0\cap T^{-1} Z_1\cap \ldots \cap T^{-(n-1)}Z_{n-1}\neq \emptyset}{Z_i\in \Zp \text{ for all $i$}}
\]
for the collection of \emph{cylinder sets of level $n$} and observe 
that $T^n$ is well-defined and 
strictly monotonic on each element of $\Zp_n$. In order to avoid trivialities we shall assume 
throughout this article that  $\Zp_n$ is non-empty for all $n$.

Let $D(T^n)$ denote the derivative of $T^n$ and 
$m$ Lebesgue measure on $I$. We set
\[ \Xi_n(T)=\frac{1}{n}\sup_{Z\in \Zp_n} \log \frac{\sup_{x\in Z}\abs{D(T^n)(x)}}{ \makebox{$m(T^nZ)$}\,,   }
\]
 and define
\[ \Xi(T)=\limsup_{n\to \infty} \Xi_n(T)\,.\]
The number $\Xi(T)$ quantifies expansivity of the map. 
In the particular case where $T$ is Markov\footnote{A piecewise monotonic $C^1$-map $T:\bigcup \Zp \to I$ 
is said to be \emph{Markov} if for every $Z\in \Zp$  
the closure of its image $\cl(T(Z))$ is a union of 
closures of elements of the initial partition $\Zp$.},
it follows that 
$m(T^n Z)$ is uniformly bounded from below, so the number $\Xi(T)$ measures the growth rate of the maximal 
expansion of $T^n$. 

Let us now make the notion of a hole more precise. For $N\in \N$ 
we write
$\Ho_N(I)$ for the collection of sets $H\subset I$ which may be written as a union of
at most $N$ closed subintervals of $I$. We then define
 \begin{equation}
  \label{def:holespace}
    \Ho(I)=\bigcup_{N\in \N}\Ho_N(I)
  \end{equation}
to be our `space of holes' and equip it with the following distance function 
 \begin{equation}
   \label{def:holedist}
    \dist(H_1,H_2)=m((H_1\setminus H_2) \cup (H_2\setminus H_1)) \quad (\forall H_1,H_2 \in \Ho(I)) \, . 
  \end{equation}
Note that this distance is a pseudometric, that is, it is a \textit{bona fide} metric, 
except that the distance between two holes may vanish without the holes coinciding. 
Note also that it does not distinguish the number of intervals 
constituting a hole. 

Given a map $T:\bigcup \Zp\to I$ and a hole $H$ as above, 
we will be interested in the dynamics of the map restricted to the complement of $H$. More precisely, we consider $T$ on the initial partition
\begin{equation*}
 \Zp^H=\all{Z\cap (I\setminus H) \neq \emptyset}{Z\in \Zp}
 \label{def:ZH} 
  \end{equation*}
and let $ \Zp^H_n$ denote the corresponding collection of cylinder sets
of level $n$. In passing, we note that elements of  $\Zp^H_n$ are not necessarily intervals in the usual sense.  
The (non-negative)\footnote{
There is no standard convention for the case when all points escape, that is, when 
$\Zp^H_n$ becomes eventually trivial. Here, we choose to
set $\htop(T,H)=0$ in this case, as this assures continuity of
the topological entropy, as we shall see in the following.}
topological entropy
 of the restricted map is then given by the exponential growth
 rate of the number of elements in $\Zp^H_n$ 
\begin{equation*}
   \htop(T, H) = \lim_{n \rightarrow \infty}
      \frac1n \log^+ \Card \Zp^H_n \geq 0 ,
      \label{def:htop}
\end{equation*}
 where $\log^+(t) = \max\{0,\log(t)\}$.
  By \cite{MS}, this coincides with 
other definitions of the topological entropy. 

Associated with the restricted map we also have a transfer operator acting on $BV(I)$, the space of functions of bounded variation on $I$, 
\[ (\tr_{I\setminus H}f)(x) = \sum_{y\in T^{-1}(x)}\chi_{I\setminus H}(y)f(y) \,.\]
It turns out that if $\htop(T,H)>0$ then $\exp(\htop(T,H))$ is an isolated eigenvalue (in fact the largest in modulus) of $\tr_{I\setminus H}$, and hence also a pole of the resolvent of $\tr_{I\setminus H}$. As we shall see shortly, the order of this pole plays a role for the degree of regularity of the topological entropy of $T$ at $H$.

Our main result on the regularity of the topological entropy as a function of the hole, to be proved in 
Section~\ref{sec:htop}, can now be formulated as follows. 
 
\begin{theorem} \label{theorem:Main}
Suppose that $T:\bigcup \Zp \to I$ is a 
piecewise  monotonic $C^1$-map with $0 < \Xi(T)<+\infty$.
Let $\mathcal S$ be a neighbourhood of $t\in \R$ and suppose that  
$(H_s)_{s\in \mathcal{S}} \subset \Ho(I)$ is a family of holes which is
Lipschitz continuous in $s$ and for which the number of 
holes is uniformly bounded.
Then $s\mapsto \htop(T, H_{s})$ is continuous at $t$.

Furthermore, if \  $\htop(T, H_{t})>0$  \ 
then $s\mapsto \htop(T, H_{s})$ 
is H\"older continuous at $t$ with local H\"older exponent
(see Definition \ref{def:Holder}) 
satisfying
\begin{equation}
 \label{eq:HolExp}
  \Hol(\htop(T,H_{\cdot}), t)\geq 
      \frac{\htop(T,H_{t}) }{ p \; \Xi(T)}\,, 
\end{equation}
where $p$ is the order of the pole of the resolvent 
of $\tr_{\chi_{I\setminus H_t}}$ at $\exp(\htop(T,H_t))$. 
\end{theorem}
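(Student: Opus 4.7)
The plan is to interpret the theorem spectrally and apply Keller--Liverani-style perturbation theory. For each $s$ near $t$, set $\tr_s := \tr_{\chi_{I \setminus H_s}}$. When $\htop(T,H_s) > 0$, one has $\exp(\htop(T,H_s)) = \rho(\tr_s)$, and the H\"older hypothesis says $\lambda_t := \exp(\htop(T,H_t))$ is a pole of order $p$ of the resolvent of $\tr_t$. The proof will deduce continuity, and then H\"older continuity with the claimed exponent, of $s \mapsto \lambda_s$, from which the corresponding statements for $\htop(T,H_s) = \log \lambda_s$ follow immediately.

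The first step is a uniform Lasota--Yorke inequality
\[ \var(\tr_s^n f) \leq A_n \var(f) + B_n \|f\|_{L^1}, \qquad s \in \mathcal{S}, \]
with $\limsup_n n^{-1} \log A_n$ controlled by $\Xi(T)$; the specific ratio appearing in the definition of $\Xi_n(T)$ is precisely what governs the variation bound for the counting transfer operator. Hennion's theorem then yields uniform quasi-compactness and a common bound on the essential spectral radius $\rho_{\mathrm{ess}}(\tr_s)$. The positive-entropy hypothesis ensures that $\lambda_t$ is strictly separated from $\rho_{\mathrm{ess}}(\tr_t)$, with the gap quantified by $\Xi(T)$.

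The second step is to estimate the perturbation in the strong-to-weak operator norm:
\[ \|\tr_s - \tr_t\|_{BV \to L^1} \leq C \, \dist(H_s, H_t), \]
which follows because $(\tr_s - \tr_t) f$ is the pushforward of a function supported on the symmetric difference $H_s \triangle H_t$ and bounded by $\|f\|_{L^\infty} \leq \|f\|_{BV}$. The crucial point is that this estimate holds only in strong-to-weak norm; in the strong $BV$-norm the perturbation is merely bounded, since characteristic functions of small sets have uncontrolled variation. Hence classical analytic perturbation theory does not apply and the two-norm formulation of Keller--Liverani is indispensable.

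With uniform quasi-compactness and weak-norm smallness of the perturbation in hand, a Keller--Liverani-type theorem delivers continuity of $\lambda_s$ at $s = t$, proving the continuity assertion. The main obstacle is producing the specific H\"older exponent $\htop(T,H_t)/(p\,\Xi(T))$: this requires a quantitative Keller--Liverani result for eigenvalues that are poles of order $p$, in which the factor $1/p$ arises from the Jordan-block geometry at $\lambda_t$ (in the spirit of classical Kato perturbation theory) and the factor $\htop(T,H_t)/\Xi(T)$ encodes the ratio between the logarithm of the leading eigenvalue and the logarithmic spectral gap. Combining this quantitative perturbation bound with the Lipschitz dependence of $H_s$ on $s$ then converts the H\"older estimate on $\lambda_s$ into the claimed bound on $\htop(T,H_s) = \log \lambda_s$.
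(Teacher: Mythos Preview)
Your overall architecture matches the paper's: reduce to transfer operators $\tr_s=\tr_{\chi_{I\setminus H_s}}$, establish a uniform Lasota--Yorke inequality, estimate $\tnorm{\tr_s-\tr_t}$ in the $BV\to L^1$ norm by $\dist(H_s,H_t)$, and then invoke a refined Keller--Liverani theorem in which the order $p$ of the pole appears as a $1/p$ loss in the H\"older exponent. That is exactly what the paper does (its Corollary~\ref{coro:ourcoro} is the refined KL statement, and the proof of Theorem~\ref{theorem:Main} in Section~\ref{sec:htop} specialises Corollary~\ref{coro:main} to $g_s=\chi_{I\setminus H_s}$).

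However, you have misidentified the roles of the constants, and this would derail the computation of the exponent. For the weight $g=\chi_{I\setminus H}$ one has $\esup{g_n}{Z}\leq 1$ for every cylinder $Z$, so $\Theta(T,g)=0$; this is what bounds the \emph{strong-norm} coefficient $a_n$ in the Lasota--Yorke inequality and hence the essential spectral radius: $\rho_{\rm ess}(\tr_s)\leq 1$. The quantity $\Xi(T)$ does \emph{not} control the variation coefficient or the spectral gap; it enters only through the \emph{weak-norm} ($L^1$) coefficient $A_n$, which grows like $e^{n\Xi(T)}$. In the Keller--Liverani exponent $\eta=\log(r/a)/\log(A/a)$ one thus has $a=e^{\Theta}=1$, $A=e^{\Xi(T)}$, and $r$ just below $\lambda_t=e^{\htop(T,H_t)}$, giving $\eta\to \htop(T,H_t)/\Xi(T)$. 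So the ratio $\htop/\Xi$ is not ``leading eigenvalue over spectral gap'' (the logarithmic spectral gap here is $\htop-\Theta=\htop$) but rather ``leading eigenvalue over growth rate of the weak-norm bound''.

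A second gap: your continuity argument presupposes $\htop(T,H_s)>0$ so that $\htop=\log\rho(\tr_s)$. The theorem also asserts continuity at points with $\htop(T,H_t)=0$, where $\rho(\tr_t)$ may even vanish. The paper handles this by writing $\htop(T,H_s)=\max\{P(T,g_s),0\}$ with $P(T,g_s)=\log\rho(\tr_s)$ and showing separately (Corollary~\ref{coro:main}(a)) that when $P(T,g_t)\leq \Theta=0$ one still has $\limsup_{s\to t}P(T,g_s)\leq 0$, using the spectral inclusion (\ref{KL:eq:spec}) from Keller--Liverani. You should add this case.
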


It is rather curious that the local H\"older exponent of the topological entropy at a particular hole depends 
itself on the value of the topological entropy at that hole. A similar behaviour of the topological entropy, 
albeit not for maps with holes, but for one-parameter deformations of the tent map, 
has already been conjectured by Isola and Politi in the early nineties (see ~\cite{IP}). 
Our numerical simulations also indicate that the factor $p$ can not be omitted in the above formula.  

The theorem above is in fact a special case of a more general theorem with similar hypotheses and similar 
conclusions, but with the behaviour of the topological entropy as a function of the hole replaced by 
the behaviour of the topological pressure as a function of the potential (see Corollary~\ref{coro:main}). 

The proof of the theorem above and its generalisation relies on a 
spectral perturbation theorem of 
Keller and Liverani~\cite{KL99}, which has opened up a rich seam of applications in various areas. 
In fact, our proof relies on a refinement of the Keller-Liverani theorem 
(see Corollary~\ref{coro:ourcoro}) which is of interest in its own right and elucidates the role of the pole of the resolvent of the perturbed operator. 

As an illustration of the theorem above we consider the topological entropy of the 
doubling map $T(x)=2x \mod 1$ on the unit interval with a uniformly left-expanding hole, that is, we are 
interested in the regularity of the function 
\[ (1/2,1) \ni a \mapsto h(a)=\htop(T, [a,1])\,.\] 
It is not difficult to see that $\Xi(T)=\log 2$, so the first part of the 
theorem above immediately implies  that $h$ is continuous, thus giving a new proof of a result 
originally due to Urba\'{n}ski \cite[Theorem~1]{U}. Moreover, as we shall show in 
Section~\ref{sec:ex},  the order of the pole of $\tr_{I\setminus [a,1]}$ at $\exp(h(a))$ is one for each 
$a\in (1/2,1)$, 
so $h$ is H\"older continuous at each $a\in (1/2,1)$ with local H\"older exponent satisfying 
\begin{equation}
\label{eq:CT}
 \Hol(h,a)\geq \frac{h(a)}{\log 2}\,.
 \end{equation}
The above lower bound for the local H\"older exponent 
was recently obtained by Carminati and Tiozzo \cite{CT}, using a different 
approach of a combinatorial flavour, which, as a bonus, also yields that there is equality in (\ref{eq:CT}) for all 
$a\in (1/2,1)$ at which $h$ is not locally constant. 
In fact, they are able to show that the local  H\"older exponent of the topological entropy of 
the $d$-adic map $T(x)=dx \mod 1$ with $d\in \N\setminus \set{1}$ with uniformly left-expanding hole equals the topological entropy divided by $\log d$ for all points at which the topological entropy is not locally constant. 

 This paper is organised as follows. In Section~\ref{sec:setup} we consider 
piecewise monotonic interval maps and study the corresponding transfer operators with general weights on 
spaces of functions of bounded variation. The main goal will be to obtain a Lasota-Yorke inequality for the 
transfer operator (see Proposition~\ref{prop:LYI}), which is sufficiently uniform in order for the Keller-Liverani 
Theorem to apply. Section~\ref{sec:KL} is devoted to proving our refinement of the Keller-Liverani Theorem 
(see Corollary~\ref{coro:ourcoro}). In the following Section~\ref{sec:holder}, we define the notion of the 
pressure $P(T,g)$ of a piecewise monotonic interval map $T$ with a given weight $g$ 
in a form suitable for our applications and show its equivalence to other definitions 
(see Theorem~\ref{thm:bkpressure}). 
We then go on to prove our main result on the regularity of $P(T,g)$ as a function of $g$ (see Corollary~
\ref{coro:main}). In Section~\ref{sec:htop} we specialise the results of the previous section to prove 
our Theorem~\ref{theorem:Main} on the regularity of the topological entropy as a function of the hole. In
Section~\ref{sec:ex} we apply the results of the previous section to the doubling map with 
left-expanding hole, for which we are able to show that the order of the pole is always equal to one. 
We also consider the doubling map with a sliding hole of fixed size, showing that a double pole can 
suddenly occur. 

\section{Setup}
\label{sec:setup}

Let $T:\bigcup \Zp \rightarrow I$ be a
piecewise monotonic $C^1$-map.
Given a bounded function $g:\bigcup \Zp \to \C$ we define the 
\emph{Ruelle transfer operator of $T$ with weight $g$} by 
\[ 
  \tr_gf=\sum_{Z\in \Zp} (f\cdot g)\circ T_{|Z}^{-1} \cdot \chi_{TZ}\,, 
\]
for any bounded $f:I\to \C$. Here, $T_{|Z}^{-1}$ denotes the inverse
of the 
restriction of $T$ to $Z$. 
It is not difficult to see that the $n$-th power of $\tr_g$ can be written 
\[ 
  \tr_g^nf=\sum_{Z\in \Zp_n} (f\cdot g_n)\circ T_{|Z}^{-n} \cdot \chi_{T^nZ}\, 
\]
where $T_{|Z}^{-n}$ denotes the inverse of $T^n$ restricted to $Z\in \Zp_n$ 
and $g_n:\bigcup \Zp_n \to \C$ is given by 
\[ 
  g_n=\prod_{k=0}^{n-1}g\circ T^k\,.
\]
We shall see presently that, for suitable $g$, 
the transfer operator $\tr_g$ has nice spectral properties on the space $BV$ of 
functions of bounded variation, 
the definition of which we briefly 
recall. 

Let $m$ denote Lebesgue measure on $\R$. For $A\subset \R$ measurable
we write $L^1(A)=L^1(A,m)$ to denote the Banach space of
(equivalence classes) of $m$-integrable functions on $A$ with the
usual norm. Suppose now that $J\subset \R$ 
is a bounded interval. If $f:J\to \C$ is an ordinary function we write 
\[ 
  \bigvee_J f = \sup \all{ \sum_{i=1}^n\abs{f(c_{i+1})-f(c_{i})} }
                            { n\in\N,c_0<c_1<\cdots < c_n, c_i\in J } 
\]
for the total variation of $f$, while for $f\in L^1(J)$ 
the total variation is given by
\[ 
  \var_J(f)=\inf \all{\bigvee_J \tilde{f} }
                         {\text{$\tilde{f}$ is a version of $f$}}\,.
\]
The space of functions of bounded variation is now defined by 
 \[ BV(J)=\all{f\in L^1(J)}{\var_J(f)<\infty}\,. \]
When equipped with the norm 
\[ \norm{f}{BV(J)}=\var_J(f)+\int_J\abs{f}\,dm \]
it becomes a Banach space, which is compactly embedded in $L^1(J)$ 
(see, for example \cite[Lemma 5]{HK} or 
\cite[Theorem~1.19]{giusti}). For other characterisations of this
space, see, for example \cite[pp.~26-9]{giusti} or 
\cite[Theorem~2.3.12]{BG}. 

We briefly mention $BV$ spaces over more general sets, 
which are defined as follows. 
Let $\mathcal J$ be a finite collection of mutually disjoint
open
intervals and $A=\bigcup \mathcal{J}$. For 
$f\in L^1(A,m)$, we let 
\[ \var_{A}(f)=\sum_{J\in \mathcal{J}}\var_{J}(f) \]
and define 
 \[ BV(A)=\all{f\in L^1(A)}{\var_{A}(f)<\infty}\,. \]

It turns out that, for suitable $g$, 
the transfer operator $\tr_g$ of a piecewise monotonic $C^1$ map
$T:\bigcup \Zp \to I$ 
is a continuous endomorphism of $BV(I)$ with 
discrete peripheral spectrum (see \cite{HK, rychlik, Keller, BK}). 
Moreover, 
we shall see that the peripheral spectrum is stable under suitable
perturbations of $g$. 
This will follow from an 
application of the 
Keller-Liverani perturbation theorem \cite{KL99}. In order to apply it
we require a 
Lasota-Yorke inequality for 
$\tr_g$, which is suitably uniform in $g$. 

In our derivation we shall follow the approach in \cite{BK}, where 
a 
$BV$-$L^\infty$ 
Lasota-Yorke inequality is proved. 
For reasons that will become apparent later on, we require a 
$BV$-$L^1$ Lasota-Yorke inequality, similar to the one by Rychlik~\cite{rychlik},  
but with more explicit control of the coefficients. 

We start by quickly summarising some  
useful properties of 
variation. In order to declutter notation it will be useful to
introduce the following shorthand: for $J$ a real interval and 
$f\in L^\infty(J,m)$ we write 
\[ \esup{f}{J}:=\norm{f}{L^\infty(J)} \] 
for its $L^\infty$-norm.   

\begin{lemma} 
\label{lem:varlem}
Let $J$ and $K$ be  bounded non-empty intervals and let $f\in BV(J)$.   
  \begin{enumerate}[(a)]
     \item \label{varlem:a} 
          If $K\subset J$, then 
          \[ \var_K(f)\leq \var_J(f)\,. \]
     \item \label{varlem:b} If $K\subset J$, then 
          \[ \var_{J}(f\chi_{K})\leq \var_K(f)+2\esup{f}{K}\,.\]
     \item \label{varlem:c} If  $S:K\to J$ is a homeomorphism, then 
          \[ \var_K(f\circ S)=\var_J(f)\,.\]
     \item \label{varlem:d} 
          We have  
          \[ \esup{f}{J}\leq \var_J(f)+\frac{1}{m(J)}\int_J\abs{f}\,dm\,.\]
      \item \label{varlem:e} 
          If $f_1,\ldots,f_n\in BV(J)$, then 
          \[ 
            \var_J(\prod_{k=1}^nf_k)\leq \sum_{k=1}^{n} \esup{\prod_{i=1}^{k-1}f_i}{J}\var_J(f_k)
            \esup{\prod_{i=k+1}^{n}f_i}{J}\,.
          \]
       \item \label{varlem:f} 
         If $J_1,\ldots, J_n$ are subintervals of $J$ with mutually disjoint interiors then 
          \[ \sum_{k=1}^n\var_{J_k}f \leq \var_J(f)\,.\]
  \end{enumerate}
\end{lemma}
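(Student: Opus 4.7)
The plan is to reduce each statement to the analogous property for the pointwise total variation $\bigvee_J \tilde{f}$ of a genuine function $\tilde f$, and then pass to the essential total variation $\var_J(f)$ by taking an infimum over versions. Throughout I will use that for any $\varepsilon>0$ there exists a representative $\tilde f$ of $f$ with $\bigvee_J \tilde f \le \var_J(f)+\varepsilon$, and that $\esup{f}{K}\le \sup_K|\tilde f|$ for any version $\tilde f$.

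Parts \eqref{varlem:a}, \eqref{varlem:c}, \eqref{varlem:f} are essentially formal. For \eqref{varlem:a} any partition of $K$ extends to one of $J$ (or simply restricts a partition of $J$), so $\bigvee_K \tilde f \le \bigvee_J \tilde f$ for any version, and taking infima gives the claim. For \eqref{varlem:c}, a homeomorphism $S\colon K\to J$ is monotonic, hence induces a bijection between ordered partitions of $K$ and $J$ preserving the values $f(S(c_i))=f\circ S(c_i)$; versions correspond, too. Part \eqref{varlem:f} follows because given arbitrary partitions of each $J_k$ their union, together with endpoints separating the $J_k$, is a partition of $J$ whose total variational sum is at least the sum over the individual partitions.

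For \eqref{varlem:b} I fix a version $\tilde f$ and a partition $c_0<\cdots<c_n$ of $J$. Since $K$ is an interval, the indices split into those where both $c_i, c_{i+1}\in K$ (contribution $\le \bigvee_K \tilde f$), those where both lie in $J\setminus K$ (contribution $0$), and at most two ``boundary'' indices where $\tilde f\chi_K$ jumps from $0$ to $\tilde f(c_i)$ or vice versa, each contributing at most $\sup_K|\tilde f|$. The resulting estimate $\bigvee_J(\tilde f\chi_K)\le \bigvee_K\tilde f +2\sup_K|\tilde f|$ becomes the claim once I take the infimum over versions with $\bigvee_K\tilde f$ close to $\var_K(f)$ and $\sup_K|\tilde f|$ close to $\esup{f}{K}$; the only subtlety is choosing a single version achieving both simultaneously, which I handle by first picking $\tilde f$ realising the essential variation up to $\varepsilon$ (the canonical right-continuous BV representative works) and then noting that such a representative automatically satisfies $\sup_K|\tilde f|=\esup{f}{K}$. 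Part \eqref{varlem:d} uses the pointwise bound $|\tilde f(x)|\le|\tilde f(x_0)|+\bigvee_J\tilde f$ for all $x,x_0\in J$; integrating in $x_0$ over $J$ and dividing by $m(J)$ gives $\sup_J|\tilde f|\le \frac{1}{m(J)}\int_J|\tilde f|\,dm +\bigvee_J\tilde f$, and then the usual infimum/approximation argument over versions yields the stated essential-norm inequality.

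Part \eqref{varlem:e} uses the telescoping identity
\[
  \prod_{k=1}^n f_k(y) - \prod_{k=1}^n f_k(x) = \sum_{k=1}^n \Bigl(\prod_{i<k} f_i(y)\Bigr) \bigl(f_k(y)-f_k(x)\bigr) \Bigl(\prod_{i>k} f_i(x)\Bigr),
\]
applied to any partition of $J$; summing absolute values along the partition yields the bound with $\sup$'s of the partial products in place of $\esup{\cdot}{J}$'s, and one more passage to infimum over versions concludes. The main obstacle throughout is the bookkeeping in \eqref{varlem:b} and \eqref{varlem:e}, where one needs the \emph{same} version to simultaneously control both a variation and a supremum; this is why I rely on the existence of the canonical BV representative for which the pointwise variation equals the essential variation and the pointwise supremum equals the essential supremum on every subinterval.
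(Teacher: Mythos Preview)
Your proposal is correct and, for part~(e), uses exactly the same telescoping identity as the paper; the paper in fact only proves (e), declaring (a)--(d) and (f) ``well-known'', so your additional arguments for those parts simply fill in what the paper omits. Your explicit acknowledgement of the need for a single good representative (the canonical BV version) to control variation and supremum simultaneously in (b) and (e) is a point the paper glosses over with ``from which the assertion follows''.
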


\begin{proof}
These are all well-known, apart, perhaps, from (e), which is proved as
follows. If $\tilde{f}_j$ is a version of $f_j$ and $x,y\in J$ we
have  
\[  \prod_{k=1}^n \tilde{f}_k (x)  - \prod_{k=1}^n \tilde{f}_k (y)
     =
     \sum_{k=1}^n \prod_{j<k} \tilde{f}_j(y) 
                            (\tilde{f}_k(x) - \tilde{f}_k(y))
                            \prod_{k<l} \tilde{f}_l(x)\,,
\]
so 
\[ 
    \bigvee_J(\prod_{k=1}^n\tilde{f}_k) 
     \leq 
    \sum_{k=1}^{n} \sup_J \left | \prod_{i=1}^{k-1}\tilde{f}_i \right | 
            \bigvee_J(\tilde{f}_k)  \sup_J \left |
              \prod_{i=k+1}^{n}\tilde{f}_i \right | \,,
\]
from which the assertion follows. 
\end{proof}

We shall now derive a number of auxiliary results, including a uniform Lasota-Yorke inequality for the 
action of the transfer operator on functions of bounded variation. 
Here and in the following we use $D(T^n)$ to denote the derivative of $T^n$.

\begin{lemma} \label{lem:varLg} \label{lem:intLg}
Let $T:\bigcup \Zp \to I$ be a piecewise monotonic $C^1$-map and let $g\in BV(\bigcup \Zp)$. 
For $n\in \N$ and $Z\in \Zp_n$ write 
\begin{align}
   a_n(Z)&=3\esup{g_n}{Z}+\var_Z(g_n) \label{eq:anZ}\,, \\
   A'_n(Z)&=(2\esup{g_n}{Z}+\var_Z(g_n))
             \frac{\esup{D(T^n)}{Z}}{\makebox{$m(T^nZ)$}}\,, \label{eq:AnZ} \\[2mm]
    A''_n(Z)&=   \esup{g_n}{Z}\esup{D(T^n)}{Z}\,.
     \label{eq:AAnZ}
\end{align}
\

Then for any $f\in BV(I)$ we have 
    \begin{align}
     \var_I(\tr^n_g(f\chi_Z)) & \leq 
     a_n(Z) \var_Z(f)+ A'_n(Z) \int_Z\abs{f}\,dm 
     \label{bound:var} \\
      \norm{\tr_g^n(f\chi_Z)}{L^1(I)}
      & \leq 
      A''_n(Z)\int_Z\abs{f}\,dm\,.
      \label{bound:L1}
     \end{align}
\end{lemma}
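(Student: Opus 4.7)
The plan is to begin with the key structural observation that, since the cylinders in $\Zp_n$ have pairwise disjoint interiors, only the branch corresponding to $Z$ contributes to $\tr_g^n(f\chi_Z)$. Concretely,
\[
  \tr_g^n(f\chi_Z) = (f g_n)\circ T_{|Z}^{-n}\cdot \chi_{T^nZ}.
\]
Both estimates can then be read off from this single-term expression combined with the six points of Lemma~\ref{lem:varlem}.

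The bound \eqref{bound:L1} is immediate: estimate $|g_n|\le \esup{g_n}{Z}$ pointwise on $Z$, then change variables $y=T^nx$ on $Z$, noting $dm(y)=|D(T^n)(x)|\,dm(x)$ and $|D(T^n)(x)|\le \esup{D(T^n)}{Z}$, which yields
$\norm{\tr_g^n(f\chi_Z)}{L^1(I)}\le \esup{g_n}{Z}\,\esup{D(T^n)}{Z}\int_Z|f|\,dm = A''_n(Z)\int_Z|f|\,dm$.

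For the variation bound \eqref{bound:var} the plan is to chain parts (b), (c), (e) and (d) of Lemma~\ref{lem:varlem}. Part (b), applied with $K=T^nZ\subset J=I$, gives
\[
  \var_I(\tr_g^n(f\chi_Z))\le \var_{T^nZ}\bigl((fg_n)\circ T_{|Z}^{-n}\bigr) + 2\esup{(fg_n)\circ T_{|Z}^{-n}}{T^nZ}.
\]
Part (c), applied to the homeomorphism $T_{|Z}^{n}:Z\to T^nZ$ (assured by monotonicity), rewrites the variation on $T^nZ$ as $\var_Z(fg_n)$, and the sup-norm is unchanged. Next, part (e) applied to the product $f\cdot g_n$ yields $\var_Z(fg_n)\le \esup{g_n}{Z}\var_Z(f)+\esup{f}{Z}\var_Z(g_n)$. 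Collecting the terms so far produces
\[
  \var_I(\tr_g^n(f\chi_Z))\le \esup{g_n}{Z}\var_Z(f) + \bigl(\var_Z(g_n)+2\esup{g_n}{Z}\bigr)\esup{f}{Z}.
\]

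The last step is to convert $\esup{f}{Z}$ into the mixed quantity appearing in \eqref{bound:var}. Part (d) gives $\esup{f}{Z}\le \var_Z(f)+\frac{1}{m(Z)}\int_Z |f|\,dm$, and the key observation linking this to the coefficient announced in \eqref{eq:AnZ} is the Jacobian inequality
\[
  m(T^nZ)=\int_Z |D(T^n)|\,dm\le \esup{D(T^n)}{Z}\cdot m(Z),
\]
so that $1/m(Z)\le \esup{D(T^n)}{Z}/m(T^nZ)$. Substituting and collecting the $\var_Z(f)$ and $\int_Z|f|\,dm$ terms produces exactly the coefficients $3\esup{g_n}{Z}+\var_Z(g_n)=a_n(Z)$ and $(2\esup{g_n}{Z}+\var_Z(g_n))\esup{D(T^n)}{Z}/m(T^nZ)=A'_n(Z)$ respectively.

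The only conceptual (as opposed to bookkeeping) issue is the conversion of $\esup{f}{Z}$ via (d), together with the Jacobian bound $m(T^nZ)\le \esup{D(T^n)}{Z}\,m(Z)$; this is what forces the coefficient of the $L^1$-term to be of the form $\esup{D(T^n)}{Z}/m(T^nZ)$ rather than $1/m(Z)$, and it is also what makes the inequality uniform enough later on to feed into the Keller--Liverani machinery. Everything else is careful accounting.
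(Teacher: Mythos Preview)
Your proof is correct and follows essentially the same route as the paper: the single-branch identity $\tr_g^n(f\chi_Z)=(fg_n)\circ T_{|Z}^{-n}\cdot\chi_{T^nZ}$, then parts (b), (c), (e), (d) of Lemma~\ref{lem:varlem} in that order for the variation bound together with the Jacobian inequality $m(T^nZ)\le \esup{D(T^n)}{Z}\,m(Z)$, and a change of variables for the $L^1$ bound. The only difference is cosmetic (you treat the $L^1$ estimate first).
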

\ 

\begin{proof} 
Using (\ref{varlem:b}), (\ref{varlem:c}), and (\ref{varlem:e}) of
Lemma~\ref{lem:varlem} we have
\begin{align*}
            \var_I(\tr^n_g(f\chi_Z))
       =  &\var_I( (f\cdot g_n)\circ T_{|Z}^{-n}\cdot \chi_{T^nZ}) \\
    \leq &\var_{T^nZ}( (f\cdot g_n)\circ T_{|Z}^{-n}) + 2\esup{(f\cdot g_n)\circ T_{|Z}^{-n}}{T^nZ} \\
    \leq &\var_{Z}(f\cdot g_n) + 2\esup{f}{Z}\esup{g_n}{Z} \\
    \leq &\var_{Z}(f)\esup{g_n}{Z} +\esup{f}{Z}\var_Z(g_n)+ 2\esup{f}{Z}\esup{g_n}{Z} \\
        = &\esup{g_n}{Z}\var_Z{f} +(2\esup{g_n}{Z}+\var_Z(g_n))\esup{f}{Z}\,.
\end{align*}
But, by (\ref{varlem:d}) of Lemma~\ref{lem:varlem} we have 
  \[ \esup{f}{Z}\leq \var_Z(f)+\frac{1}{m(Z)}\int_Z\abs{f}\,dm\,,\]
and the first assertion follows by observing that 
\[ m(T^nZ)=\int_{Z}\abs{D(T^n)}\,dm\leq \esup{D(T^n)}{Z}m(Z)\,.\]
For the second we
use a change of variables formula to obtain 
\begin{align*}
            \norm{\tr_g^n(f\chi_Z)}{L^1(I)}
       =   &\int_I \abs{(f\cdot g_n)\circ T_{|Z}^{-n}\cdot \chi_{T^nZ}}\,dm\\
       =   &\int_{T^nZ} \abs{f\cdot g_n}\circ T_{|Z}^{-n}\,dm \\
       =   &\int_{Z} \abs{f}\cdot \abs{g_n}\cdot \abs{D(T^n)}\,dm \\
      \leq &\esup{g_n}{Z}\esup{D(T^n)}{Z}\int_Z\abs{f}\,dm\,.
\end{align*}
\end{proof}

We are now able to deduce the following $BV$-$L^1$ Lasota-Yorke inequality. 

\begin{proposition}
\label{prop:LYI}
Let $T:\bigcup \Zp \to I$ be a piecewise monotonic $C^1$-map and 
let $g\in BV(\bigcup\Zp )$. 
 Then we have for any $n\in\N$
\begin{equation}
   \label{eq:L1B}
    \norm{\tr^n_gf}{L^1(I)}\leq A_n\norm{f}{L^1(I)}\quad (\forall f\in BV(I))\,,
\end{equation}
\begin{equation}
   \label{eq:LYI} 
    \norm{\tr^n_gf}{BV(I)}\leq a_n \norm{f}{BV(I)} +A_n\norm{f}{L^1(I)}\quad (\forall f\in BV(I))\,,
\end{equation}
where, using the notation from the previous lemma, 
\begin{align*}
  a_n&=\sup_{Z\in \Zp_n} a_n(Z)\,,\\
  A_n&=\sup_{Z\in \Zp_n} (A'_n(Z)+A''_n(Z))\,.
\end{align*}
\end{proposition}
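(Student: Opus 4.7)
The plan is to reduce the proposition to Lemma~\ref{lem:varLg} by decomposing $\tr_g^n f$ cylinder by cylinder. Since the cylinders $Z\in\Zp_n$ are pairwise disjoint (being intersections of preimages of disjoint intervals), the definition of $\tr_g^n$ yields
\[
 \tr_g^n f \;=\; \sum_{Z\in\Zp_n}\tr_g^n(f\chi_Z)\,,
\]
so both the $L^1$-norm and the variation of $\tr_g^n f$ on $I$ are bounded by the sum over $Z$ of the corresponding quantities for $\tr_g^n(f\chi_Z)$ (using the triangle inequality for $\|\cdot\|_{L^1}$ and subadditivity of $\var_I$; note that subadditivity is needed, not Lemma~\ref{lem:varlem}(\ref{varlem:f}), since the images $T^nZ$ typically overlap).

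The $L^1$-bound (\ref{eq:L1B}) then falls out immediately from (\ref{bound:L1}): summing gives
\[
 \norm{\tr_g^n f}{L^1(I)}\;\leq\;\sum_{Z\in\Zp_n} A''_n(Z)\int_Z\abs{f}\,dm\;\leq\; A_n\norm{f}{L^1(I)}\,,
\]
where the last step uses $A''_n(Z)\leq A'_n(Z)+A''_n(Z)\leq A_n$ together with the disjointness of the $Z$'s, which guarantees $\sum_Z\int_Z\abs{f}\,dm\leq\norm{f}{L^1(I)}$.

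For the Lasota-Yorke inequality (\ref{eq:LYI}) I would combine (\ref{bound:var}) and (\ref{bound:L1}) before taking suprema, which is the one point that needs a little care. Namely,
\[
 \norm{\tr_g^n f}{BV(I)}\;\leq\;\sum_{Z\in\Zp_n}\Bigl[a_n(Z)\var_Z(f)+\bigl(A'_n(Z)+A''_n(Z)\bigr)\int_Z\abs{f}\,dm\Bigr]\,.
\]
Now $a_n(Z)\leq a_n$ and $A'_n(Z)+A''_n(Z)\leq A_n$ by definition, so
\[
 \norm{\tr_g^n f}{BV(I)}\;\leq\; a_n\sum_{Z\in\Zp_n}\var_Z(f)+A_n\sum_{Z\in\Zp_n}\int_Z\abs{f}\,dm\,.
\]
At this point Lemma~\ref{lem:varlem}(\ref{varlem:f}) applies (the cylinders have mutually disjoint interiors) to give $\sum_Z\var_Z(f)\leq\var_I(f)$, while $\sum_Z\int_Z\abs{f}\,dm\leq\norm{f}{L^1(I)}$ as above. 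Since $\var_I(f)\leq\norm{f}{BV(I)}$, the estimate (\ref{eq:LYI}) follows.

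The only genuinely delicate step is the one just highlighted: it would be wasteful to estimate the variation and the $L^1$ contributions separately by $\sup_Z A'_n(Z)$ and $\sup_Z A''_n(Z)$, since the sum of these two suprema is generally strictly larger than $A_n=\sup_Z(A'_n(Z)+A''_n(Z))$. By adding the two pieces \emph{inside} the sum over $Z$ one recovers exactly the coefficient $A_n$ as stated. Everything else in the argument is routine bookkeeping once Lemma~\ref{lem:varLg} is in hand.
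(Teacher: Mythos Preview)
Your proof is correct and follows essentially the same route as the paper: decompose $\tr_g^n f$ over the cylinders $Z\in\Zp_n$, apply Lemma~\ref{lem:varLg} termwise, and then recombine using Lemma~\ref{lem:varlem}(\ref{varlem:f}) for the variation sum and disjointness of the $Z$'s for the $L^1$ sum. Your remark about adding the $A'_n(Z)$ and $A''_n(Z)$ contributions inside the sum before taking the supremum is exactly what the paper does (implicitly, when it writes $\norm{\tr^n_g(f\chi_Z)}{BV(I)}\leq a_n\var_Z(f)+A_n\int_Z\abs{f}\,dm$).
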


\begin{proof}
For any $f\in BV(I)$ and $Z\in \Zp_n$, we have by Lemma~\ref{lem:intLg}  
   \[ \norm{\tr^n_g(f\chi_Z)}{L^1(I)}\leq A_n \int_Z\abs{f}\,dm\,,\]
so
   \[ \norm{\tr^n_gf}{L^1(I)}
        \leq \sum_{Z\in \Zp_n} \norm{\tr^n_g(f\chi_Z)}{L^1(I)}
        \leq A_n \sum_{Z\in \Zp_n}\int_Z\abs{f}\,dm\\
          \leq A_n\int_I\abs{f}\,dm\,,
  \]
which proves (\ref{eq:L1B}).  
Similarly, 
  \[ \norm{\tr^n_g(f\chi_Z)}{BV(I)}\leq a_n \var_Z(f)+ A_n \int_Z\abs{f}\,dm\,,\]
so, using (\ref{varlem:f}) of Lemma~\ref{lem:varlem},   
\begin{align*}  
      \norm{\tr^n_gf}{BV(I)}
        &\leq \sum_{Z\in \Zp_n} \norm{\tr^n_g(f\chi_Z)}{BV(I)}\\
        &\leq a_n \sum_{Z\in \Zp_n}\var_Z(f)+ A_n \sum_{Z\in \Zp_n}\int_Z\abs{f}\,dm\\
        &\leq a_n \var_I(f)+ A_n \int_I\abs{f}\,dm\,,
\end{align*}  
which proves (\ref{eq:LYI}).  
\end{proof}

In the following we need a control 
on the growth of the above coefficients $a_n$
 and $A_n$ which is uniform in $g$. In order to achieve this we will use the following notation. 
Define for $n\in\N$ (recalling that $\Zp_n \neq \emptyset$ by our standing assumption):
\begin{align*}
   \Theta_n(T,g) & =  \frac1n  \sup_{Z\in \Zp_n} \log \esup{g_n}{Z}\,  \\
   \Lambda_n(T)  & = \frac1n  \sup_{Z\in \Zp_n} \log \esup{D(T^n)}{Z}\, \\
   \Xi_n(T)  & =  \frac1n \sup_{Z\in \Zp_n} \log \frac{\esup{D(T^n)}{Z}}{\makebox{$m(T^n Z)$}}
                 \,  \\
\end{align*}
and let
\begin{align*}
   \Theta(T,g)  &:= \lim_{n\to\infty} \Theta_n(T,g)  \\
   \Lambda(T)  &:= \lim_{n\to\infty}  \Lambda_n(T) \\
   \Xi (T)  &:= \limsup_{n\to\infty}  \Xi_n(T)\,,   
\end{align*}
where, as before, $D(T^n)$ denotes the derivative of $T^n$.

\begin{remark} \
\begin{enumerate}
\item The first two limits can be shown to exist 
by a simple sub-additivity argument. 
\item A short calculation shows that $\Xi(T)\geq 0$. 
\item The quantity $\Lambda(T)$
measures the growth rate of the maximal expansion of $T^n$. Although it is easier to calculate than $\Xi(T)$ it 
will not play any further role in our results. We note, however, that it is closely related to $\Xi(T)$, as we shall see presently.  
Since $m(T^n Z) \leq m(I)<+\infty$ we obviously have $\Lambda(T) \leq \Xi(T)$.
There is equality when $T$ has \emph{big images}, that is, if 
   \[ 
     \liminf_{n\to \infty} \left ( \inf \all{ m(T^nZ)}{ Z\in \Zp_n}
                 \right )^{1/n}=1\,.
   \]
Note that any piecewise monotonic $C^1$-map
which is 
Markov has big images.
\end{enumerate}
\end{remark}

The following lemma establishes uniform bounds for the constants occurring in the Lasota-Yorke inequality. 
 \begin{lemma}
\label{lem:vargn}
Suppose that $T:\bigcup \Zp \to I$ is a piecewise monotonic $C^1$-map and that $g\in BV(\bigcup \Zp)$. 
Let $\beta >\exp(\Theta(T,g))$ and define the 
   quantities (tacitly assuming that $M\geq 1$)
\begin{align}
  M = M(T,g,\beta) & :=\;  \sup_{n\geq 1}\;  \sup_{Z\in \Zp_n} 
       \; \beta^{-n} \esup{g_n}{Z}  \label{M-bound}\\
   \Gamma = \Gamma(T,g) & :=\;  \sup_{Z\in\Zp} \; \myvar{Z} (g) \, .
\end{align}
Then for $n\in \N$ and $Z\in \Zp_n$
\begin{align*}
        \esup{g_n}{Z} & \leq M \beta^n \,, \\ %
  \ds \myvar{Z} (g_n)
      & \leq n \; \Gamma \; M^2 \;  \beta^{n-1}\,. 
         \end{align*}
 \end{lemma}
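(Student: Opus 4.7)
The first bound is immediate: by construction of $M$ in (\ref{M-bound}), for any $Z \in \Zp_n$ we have $\esup{g_n}{Z} \le M\beta^n$.

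For the variation bound, the plan is to write $g_n = \prod_{k=1}^{n} f_k$ with $f_k = g\circ T^{k-1}$, and apply the product rule Lemma~\ref{lem:varlem}(\ref{varlem:e}) on the interval $Z \in \Zp_n$ (which is indeed an interval, since each $T_{|Z_0}$ is monotonic so the iterated preimages are intervals). This yields
\[
  \var_Z(g_n) \;\le\; \sum_{k=1}^{n}\; \esup{g_{k-1}}{Z}\cdot \var_Z(g\circ T^{k-1})\cdot \esup{g_{n-k}\circ T^k}{Z},
\]
where I have regrouped the partial products $\prod_{i<k} f_i = g_{k-1}$ and $\prod_{i>k} f_i = g_{n-k}\circ T^k$.

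The key step is then to bound each of the three factors uniformly by quantities we already control. Write $Z = Z_0 \cap T^{-1}Z_1 \cap \cdots \cap T^{-(n-1)}Z_{n-1}$ with $Z_i \in \Zp$. Since $Z$ is contained in the element $Z_0 \cap \cdots \cap T^{-(k-2)}Z_{k-2}$ of $\Zp_{k-1}$, the definition of $M$ gives $\esup{g_{k-1}}{Z} \le M\beta^{k-1}$ (with the convention $g_0 \equiv 1 \le M$). Similarly, $T^k Z$ lies inside an element of $\Zp_{n-k}$, so $\esup{g_{n-k}\circ T^k}{Z} = \esup{g_{n-k}}{T^kZ} \le M\beta^{n-k}$. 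For the middle factor, $T^{k-1}_{|Z}$ is a homeomorphism of $Z$ onto $T^{k-1}Z$, so Lemma~\ref{lem:varlem}(\ref{varlem:c}) and (\ref{varlem:a}) together with $T^{k-1}Z \subset Z_{k-1} \in \Zp$ give
\[
  \var_Z(g\circ T^{k-1}) = \var_{T^{k-1}Z}(g) \le \var_{Z_{k-1}}(g) \le \Gamma.
\]

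Plugging these three bounds in, each summand contributes at most $M\beta^{k-1}\cdot \Gamma \cdot M\beta^{n-k} = \Gamma M^2 \beta^{n-1}$, and summing over $k=1,\ldots,n$ delivers the asserted inequality $\var_Z(g_n) \le n\Gamma M^2 \beta^{n-1}$. The only mildly subtle point is the bookkeeping that pairs the partial-product factor indices with the right cylinder level in $\Zp_{k-1}$ or $\Zp_{n-k}$, so that a single constant $M$ absorbs the geometric growth in $\beta$; everything else is a direct application of Lemma~\ref{lem:varlem}.
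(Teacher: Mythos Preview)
Your proof is correct and follows essentially the same approach as the paper: apply the product rule Lemma~\ref{lem:varlem}(\ref{varlem:e}) to $g_n=\prod_{k=1}^n g\circ T^{k-1}$, bound the two sup-norm factors by $M\beta^{k-1}$ and $M\beta^{n-k}$ using the definition of $M$, and bound the variation factor by $\Gamma$ via Lemma~\ref{lem:varlem}(\ref{varlem:c}),(\ref{varlem:a}) and the inclusion $T^{k-1}Z\subset Z_{k-1}$. Your bookkeeping of the cylinder levels is in fact slightly cleaner than the paper's, which contains a harmless off-by-one in the index of $Z_k$.
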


\begin{proof} The first equation is
obviously a consequence of (\ref{M-bound}).
Let $n\in \N$ and $Z\in \Zp_n$ with 
  \[ Z=Z_0\cap T^{-1} Z_1\cap \ldots \cap T^{-(n-1)}Z_{n-1}\,, \]
for some $Z_i\in \Zp$. Then, using (\ref{varlem:e}) of 
Lemma \ref{lem:varlem}, we have  
\begin{align*}
      \var_Z(g_n)&\leq \sum_{k=1}^{n} \esup{g_{k-1}}{Z}
              \var_Z(g\circ T^{k-1})\esup{g_{n-k}\circ T^k}{Z}\\
                &\leq \sum_{k=1}^n
                M  \beta^{k-1} \var_{T^{k-1}Z}(g)\, M \beta^{n-k}\\
                &\leq 
                 \sum_{k=1}^n\var_{T^{k-1}Z}(g)
                \; M^2\; \beta^{n-1}
                \,.
\end{align*}
But $T^{k-1}Z\subset Z_k$, so 
  \[ \sum_{k=1}^n\var_{T^{k-1}Z}(g)
    \leq \sum_{k=1}^n\var_{Z_k}(g)
    \leq n \; \Gamma \]
and the assertion follows.
\end{proof}

The following estimate will provide
a perturbative bound when varying the weight.

\begin{lemma}
\label{lem:lg_kl3}
Let $T:\bigcup \Zp \to I$ be a piecewise monotonic $C^1$-map. Then for
any $g\in L^1(\bigcup \Zp)$ and any $f\in BV(I)$ we have   
   \[ \norm{\tr_g f}{L^1(I)} 
\leq \frac{\exp(\Lambda_1(T))}{\min\set{1,m(I)}} \norm{g}{L^1(\bigcup \Zp)} \norm{f}{BV(I)}\,.\] 
\end{lemma}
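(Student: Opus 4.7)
The plan is to bound $\norm{\tr_g f}{L^1(I)}$ by direct computation, using a change of variables on each branch, followed by a Sobolev-type embedding of $BV(I)$ into $L^\infty(I)$ to convert the $L^\infty$-norm of $f$ that naturally arises into its $BV$-norm.

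First I would write
\[
\norm{\tr_g f}{L^1(I)} \le \sum_{Z\in \Zp}\int_{TZ}\bigl|(f\cdot g)\circ T_{|Z}^{-1}\bigr|\,dm,
\]
and apply the change of variables $x=T(y)$ on each $Z\in\Zp$, which introduces a Jacobian factor $\abs{DT(y)}$. This yields
\[
\norm{\tr_g f}{L^1(I)} \le \sum_{Z\in\Zp}\int_{Z}\abs{f}\,\abs{g}\,\abs{DT}\,dm.
\]
Bounding $\abs{DT(y)}\le \esup{DT}{Z}\le \exp(\Lambda_1(T))$ uniformly on each $Z$ gives
\[
\norm{\tr_g f}{L^1(I)} \le \exp(\Lambda_1(T))\,\esup{f}{I}\,\norm{g}{L^1(\bigcup \Zp)}.
\]

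Next I would convert $\esup{f}{I}$ into the $BV$-norm using part (\ref{varlem:d}) of Lemma~\ref{lem:varlem}, which gives
\[
\esup{f}{I}\le \var_I(f)+\frac{1}{m(I)}\int_I \abs{f}\,dm \le \frac{1}{\min\set{1,m(I)}}\norm{f}{BV(I)}.
\]
Combining this with the previous estimate produces the claimed inequality. There is no significant obstacle here; the only point requiring a moment of care is that the constant $\frac{1}{\min\{1,m(I)\}}$ correctly covers both the case $m(I)\ge 1$ (where $\esup{f}{I}\le \norm{f}{BV(I)}$) and the case $m(I)<1$ (where the $L^1$-part of the $BV$-norm has to absorb the factor $1/m(I)$).
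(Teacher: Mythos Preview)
Your proof is correct and follows essentially the same approach as the paper: a change of variables on each branch to bring in the Jacobian, a uniform bound $\abs{DT}\le\exp(\Lambda_1(T))$, and then Lemma~\ref{lem:varlem}(\ref{varlem:d}) to convert $\esup{f}{I}$ into $\norm{f}{BV(I)}$. The paper organises the computation by first bounding $\norm{\tr_g(f\chi_Z)}{L^1(I)}$ for each $Z$ and then summing, whereas you sum first, but this is a cosmetic difference only.
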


\begin{proof}
Using the change of variables formula we have, for $Z\in \Zp$,  
\begin{align*} 
            \norm{\tr_g(f\chi_Z)}{L^1(I)}
       =   &\int_I \abs{(f\cdot g)\circ T_{|Z}^{-1}\cdot \chi_{TZ}}\,dm\\
       =   &\int_{TZ} \abs{f\cdot g}\circ T_{|Z}^{-1}\,dm \\
       =   &\int_{Z} \abs{f}\cdot \abs{g}\cdot \abs{D(T)}\,dm \\
      \leq &\esup{f}{Z}\esup{D(T)}{Z}\int_Z\abs{g}\,dm\,.
\end{align*}
But $\esup{D(T)}{Z}\leq \exp(\Lambda_1(T))$ and by (\ref{varlem:d}) of 
Lemma~\ref{lem:varlem} 
  \[ \esup{f}{Z}\leq \esup{f}{I} 
                      \leq \var_I(f) + \frac{1}{m(I)} \int_I \abs{f}\,dm
                      \leq  \frac{1}{\min\set{1,m(I)}} \norm{f}{BV(I)}\,, \]
so the inequality follows since 
  \[   \norm{\tr_gf}{L^1(I)}  
           \leq \sum_{Z\in \Zp}\norm{\tr_g(f\chi_Z)}{L^1(I)} 
           \leq  \frac{\exp(\Lambda_1(T))}{\min\set{1,m(I)}}\norm{f}{BV(I)} \sum_{Z\in \Zp}\int _Z \abs{g}\,dm\,.\]
\end{proof}

\section{The Keller-Liverani perturbation theorem and its consequences}
\label{sec:KL}
Let $(X,\norm{\cdot}{})$ be a Banach space and $L:X\to X$ a bounded linear operator. In the following, we 
denote by $\varrho(L)$ the \emph{resolvent set} of $L$
    \[ \varrho(L)=\all{z\in \C}{\text{$zI-L$ is boundedly invertible on $X$}}\,\]
and by $\sigma(L)=\C\setminus \varrho(L)$ the \emph{spectrum} of $L$. 
Recall that the \emph{discrete spectrum} $\sigma_{\rm disc}(L)$ consists of all isolated eigenvalues of 
$L$ with finite algebraic multiplicity, and that its complement in $\sigma(L)$ is known as the 
\emph{(Browder) essential spectrum} $\sigma_{\rm ess}(L)=\sigma(L)\setminus \sigma_{\rm disc}(L)$. 
The \emph{spectral radius} of $L$ will be denoted by $\rho(L)$ and the \emph{essential spectral radius} of $L$ 
by $\rho_{\rm ess}(L)$, that is, 
 \begin{align*}
  \rho(L)&=\sup\all{\abs{\lam}}{\lam\in \sigma(L)}\,, \\
  \rho_{\rm ess}(L)&=\sup\all{\abs{\lam}}{\lam\in \sigma_{\rm ess}(L)}\,.
 \end{align*}
 Finally, given   
$\delta>0$ and $r>0$ we write 
  \[ W_{r,\delta}(L)=\all{z\in \C}{\abs{z}>r \text{ and } \dist(z,\sigma(L))>\delta}\,,\]
and 
  \[ \sigma_r(L)=\all{z\in \C}{\abs{z}\leq r}\cup \sigma(L)\,.\]

The Keller-Liverani Theorem applies in the following situation. 
Suppose that the Banach space $(X,\norm{\cdot}{})$ is equipped with a second norm $\wnorm{\cdot}$, which is 
weaker 
than the original norm in the sense that $\wnorm{f}\leq \norm{f}{}$ for every $f\in X$. Given a bounded linear 
operator $L:X\to X$ we write 
  \[ \tnorm{L}=\sup\all{\wnorm{Lf}}{f\in X, \norm{f}{}\leq 1} \]
for the norm of $L$ considered as an operator from $(X,\norm{\cdot}{})$ to $(X,\wnorm{\cdot})$. 

Furthermore, suppose that $E=[0,\eps']$ is a set of parameter values where $\eps'$ is a positive constant and that we are 
given a family 
$(L_\eps)_{\eps\in E}$ of bounded linear operators on $(X,\norm{\cdot}{})$ satisfying the following three properties. 
\begin{itemize}
\item[(KL1)]
There are constants $C_1, C_2, C_3>0$ and $0<a<A$ such that for every $\eps\in E$
\begin{equation}
   \label{KL:UB}
    \wnorm{L^n_\eps f} \leq C_1A^n\wnorm{f}   (\forall f\in X,  \forall n\in \N)\,,
\end{equation}
and 
\begin{equation}
   \label{KL:LY}
   \norm{L^n_\eps f}{} \leq C_2a^n \norm{f}{} + C_3A^n\wnorm{f}   (\forall f\in X, \forall n\in \N)\,. 
\end{equation}
\item[(KL2)] For every $\eps\in E$, if $\lam\in \sigma(L_\eps)$ and $\abs{\lam}> a$  
    then $\lam$ does not belong to the residual spectrum of $L_\eps$. 
\item[(KL3)] There is a monotonic upper-semicontinuous function $\tau:E\to [0,\infty)$ with 
    $\lim_{\eps\downarrow 0}\tau_\eps=0$
    and 
       \[ \tnorm{L_\eps-L_0}\leq \tau_\eps \quad (\forall \eps \in E)\,.\]
\end{itemize}

\begin{remark}
\label{rem:ITM}
Note that condition (KL2) follows from condition (KL1) provided that the embedding $(X,\norm{\cdot}{})\hookrightarrow (X,\wnorm{\cdot})$ is compact, since then 
\[ \rho_{\rm ess}(L_\eps)\leq a \quad (\forall \eps\in E)\,,\]
by a theorem of Ionescu Tulcea and Marinescu (see \cite{ITM} for the original paper or \cite{HH} for a more contemporary exposition). 
\end{remark}

The Keller-Liverani Theorem can now be stated as follows. 
\begin{theorem}
\label{thm:KL}
Suppose that $(L_\eps)_{\eps \in E}$ is a family of bounded linear operators satisfying conditions (KL1), (KL2), 
and (KL3) above. Fix $r\in (a,A)$ and $\delta>0$. Then there is a constant $\eps_0=\eps_0(r,\delta)>0$ 
such that 
\begin{equation}
  \label{KL:eq:spec} 
   W_{r,\delta}(L_0)\subset \varrho(L_\eps) \quad(\forall \eps\in [0,\eps_0])\,.
\end{equation}
Moreover, if ${\mathcal C}$ is a compact subset of $W_{r,\delta}(L_0)$, then there is a constant 
  $K_0=K_0(r,\delta,{\mathcal C})>0$ such that 
\begin{equation}
  \label{KL:eq:res}
  \sup_{z\in {\mathcal C}}\tnorm{(zI-L_\eps)^{-1}-(zI-L_0)^{-1}}\leq K_0 \tau_\eps^\eta 
      \quad (\forall \eps \in [0,\eps_0])\,,
\end{equation}
where 
  \[ \eta=\frac{\log(r/a)}{\log(A/a)}\,.\]
\end{theorem}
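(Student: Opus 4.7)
The plan is to follow the Keller--Liverani scheme, which hinges on iterating a resolvent identity $N$ times and then choosing $N$ optimally as a function of $\tau_\eps$. Fix a compact subset $\mathcal C \subset W_{r,\delta}(L_0)$. Since $\mathcal C$ lies in the resolvent set of $L_0$ and $z \mapsto (zI-L_0)^{-1}$ is holomorphic there, compactness furnishes a finite bound $H := \sup_{z\in\mathcal C}\norm{(zI-L_0)^{-1}}{X\to X} < \infty$. The Lasota--Yorke inequality (KL1) additionally yields, after summing the Neumann series $\sum_n z^{-n-1} L_0^n$ on the weak norm (valid for $|z|>r>a$ via (KL2) which prevents the half-plane boundary of the strong spectrum from being residual), a uniform bound of the form $\tnorm{(zI-L_0)^{-1}}\leq H'$ on $\mathcal C$, and, via (KL1) again, $\norm{L_0^n (zI-L_0)^{-1} f}{} \le C_2 a^n H \norm{f}{} + C_3 A^n H' \wnorm{f}$ for each $z\in \mathcal C$.

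The first task is to prove \eqref{KL:eq:spec}, that is, that $(zI-L_\eps)^{-1}$ exists for $\eps$ small and $z\in \mathcal C$. Writing $P_\eps(z) = (L_\eps - L_0)(zI-L_0)^{-1}$ and using (KL3), the operator $P_\eps(z)$ is bounded on $X$ but not small in the strong norm; however it satisfies $\wnorm{P_\eps(z)f}\leq \tau_\eps H \norm{f}{}$. Applying (KL1) to $L_\eps^n P_\eps(z)^N f$, its strong norm is controlled by $C_2 a^n \norm{P_\eps(z)^N f}{} + C_3 A^n \wnorm{P_\eps(z)^N f}$, which one can combine with the weak-norm bound on one factor of $P_\eps(z)$ and the strong-norm bound on the remaining $N-1$ factors. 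Choosing $n$ so that $a^n$ times the strong bound is a small fraction of $1$, we obtain $\norm{P_\eps(z)^N}{X\to X}\leq C \bigl( (a/r)^n + (A/r)^n \tau_\eps\bigr) \cdot K^{N-1}$ for some constants $C,K$ independent of $\eps$ and $z\in \mathcal C$. Optimizing $n=n(N)$ so that $(a/r)^n \asymp (A/r)^n \tau_\eps$, i.e., $n \asymp \log(1/\tau_\eps)/\log(A/a)$, yields $(a/r)^n \asymp \tau_\eps^{\eta}$ with $\eta=\log(r/a)/\log(A/a)$. Feeding this back and iterating the telescoping identity
\begin{equation*}
 (zI-L_\eps)^{-1} = \sum_{k=0}^{N-1} (zI-L_0)^{-1} P_\eps(z)^k + (zI-L_\eps)^{-1} P_\eps(z)^N
\end{equation*}
once $N$ is fixed sufficiently large, the remainder is a strict contraction and produces both the existence of $(zI-L_\eps)^{-1}$ and a uniform strong-norm bound $\sup_{z\in\mathcal C,\ \eps\in[0,\eps_0]}\norm{(zI-L_\eps)^{-1}}{}<\infty$.

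With this in hand, \eqref{KL:eq:res} follows from the single-step identity $(zI-L_\eps)^{-1}-(zI-L_0)^{-1}=(zI-L_\eps)^{-1}(L_\eps-L_0)(zI-L_0)^{-1}$. The key estimate is that for any $f$ with $\norm{f}{}\le 1$, one has $\wnorm{(L_\eps-L_0)(zI-L_0)^{-1}f}\le \tau_\eps H$, and then, applying (KL1) inside $(zI-L_\eps)^{-1}$ written as a partial Neumann-like series $\sum_{k<n}z^{-k-1}L_\eps^k + z^{-n}L_\eps^n(zI-L_\eps)^{-1}$, the weak norm of $(zI-L_\eps)^{-1}g$ is bounded by $C_1\sum_k(A/r)^k \wnorm{g}$ for the first part, and by a quantity $C_2 (a/r)^n \norm{g}{} + C_3(A/r)^n \wnorm{g}$ for the second. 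Balancing these with the optimal $n$ as above gives $\tnorm{(zI-L_\eps)^{-1}g}\le C'\wnorm{g}^\eta \norm{g}{}^{1-\eta}$ once $n$ is chosen; together with $\wnorm{(L_\eps-L_0)(zI-L_0)^{-1}f}\le \tau_\eps H$ and $\norm{(L_\eps-L_0)(zI-L_0)^{-1}f}{}\le (\text{const})\norm{f}{}$, this produces the claimed $K_0\tau_\eps^{\eta}$ bound.

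The main obstacle is the first step, namely proving invertibility of $(zI-L_\eps)$ when one only controls $L_\eps-L_0$ in the strong-to-weak norm and not in the strong operator norm. The resolution, as sketched above, is that raising the small perturbation to a sufficiently high power $N$ and then applying a single Lasota--Yorke estimate recovers a bona fide strong-norm contraction, at the price of a Hölder loss with exponent $\eta=\log(r/a)/\log(A/a)$. Everything else---holomorphy, uniformity on compact subsets of $W_{r,\delta}(L_0)$, and propagating the bound from one point of $\mathcal C$ to all of it via the resolvent equation---is comparatively routine.
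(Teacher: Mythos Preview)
The paper does not prove this theorem: its entire proof is the single line ``See \cite[Theorem~1]{KL99}.'' Your proposal therefore goes well beyond the paper by attempting to reconstruct the Keller--Liverani argument itself. The overall shape of your sketch---use the Lasota--Yorke inequality to convert weak-norm smallness of $L_\eps-L_0$ into a strong-norm contraction after iteration, and optimise the iteration depth to extract the H\"older exponent $\eta=\log(r/a)/\log(A/a)$---is indeed the mechanism behind \cite{KL99}.

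That said, your argument for the invertibility step (the part you rightly flag as the main obstacle) is garbled. You propose to bound $\norm{P_\eps(z)^N}{X\to X}$ by applying (KL1) to $L_\eps^n P_\eps(z)^N f$, but an estimate on $\norm{L_\eps^n P_\eps(z)^N f}{}$ does not yield one on $\norm{P_\eps(z)^N f}{}$ unless $L_\eps^n$ is bounded below, which it is not. The displayed bound $\norm{P_\eps(z)^N}{}\leq C\bigl((a/r)^n+(A/r)^n\tau_\eps\bigr)K^{N-1}$ therefore has no derivation, and the telescoping identity you write for $(zI-L_\eps)^{-1}$ already presupposes the existence you are trying to prove. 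In \cite{KL99} the route is different: one first uses (KL2) to ensure that any spectrum of $L_\eps$ in $\{|z|>a\}$ consists of approximate eigenvalues, then uses the identity $(zI-L_\eps)^{-1}=\sum_{k<n}z^{-k-1}L_\eps^k+z^{-n}L_\eps^n(zI-L_\eps)^{-1}$ together with (KL1) to obtain an \emph{a priori} bound on $\norm{(zI-L_\eps)^{-1}}{}$ valid wherever the resolvent exists, uniformly in $\eps$; a continuity/open--closed argument then propagates existence from $\{|z|>A\}$ throughout $W_{r,\delta}(L_0)$. Your second part, deriving \eqref{KL:eq:res} from the one-step resolvent identity once uniform resolvent bounds are in hand, is essentially correct.
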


\begin{proof} See \cite[Theorem~1]{KL99}. 
\end{proof}

Suppose now that $\lam\in \C$ with $\abs{\lam}>a$ is an isolated point of the spectrum of $L_0$. Fixing $r$ with 
$a<r<\abs{\lam}$ we can choose $\delta>0$ so that $\Delta_{\delta}(\lam)\cap \sigma_r(L_0)=\set{\lam}$, 
where 
\[ \Delta_{\delta}(\lam)=\all{z\in \C}{\abs{z-\lam}\leq \delta} \]
denotes the closed disk with radius $\delta$ and centre $\lam$. 
The Keller-Liverani Theorem now implies that for all $\eps$ sufficiently small, the positively-oriented 
boundary $\partial \Delta_{\delta}(\lam)$ belongs to $\varrho(L_\eps)$ implying that 
the spectral  projections 
  \[ P_\eps^{(\lam,\delta)}=\frac{1}{2\pi \rm{i}} \oint_{\partial \Delta_{\delta}(\lam)} (zI-L_\eps)^{-1} \,dz \] 
exist for all $\eps$ sufficiently small (e.g. for all $\eps\in [0,\eps_0(\delta/2, r)]$). In fact, more is true.  

\begin{corollary} 
\label{coro:KLcoro}
Suppose that $(L_\eps)_{\eps \in E}$ is a family of bounded linear operators satisfying conditions (KL1), (KL2), 
and (KL3) above and suppose that $\lam$ is an isolated spectral point of $L_0$ with $\abs{\lam}>a$. Fix $r$ with 
$a<r<\abs{\lam}$. Then there is  a constant $\delta_1=\delta_1(r)>0$ such that 
for every 
$\delta\in (0,\delta_1]$ there are constants $K_1=K_1(r,\delta)>0$ and $\eps_1=\eps_1(r,\delta)>0$ with the following 
properties: 

\begin{equation}
\label{KL:coro1}
   \partial \Delta _\delta (\lam)\subset \varrho(L_\eps) \quad (\forall \eps\in [0,\eps_1])\,,
\end{equation}

\begin{equation}
\label{KL:coro2}
    \norm{P_\eps^{(\lam,\delta)}f}{}\leq K_1\wnorm{P_\eps^{(\lam,\delta)}f} 
        \quad (\forall f\in X, \forall \eps \in [0,\eps_1])\,,
\end{equation}

\begin{equation}
\label{KL:coro3}
     {\rm rank}\,P_\eps^{(\lam,\delta)}= {\rm rank}\,P_0^{(\lam,\delta)} \quad (\forall \eps\in [0,\eps_1])\,.
\end{equation}
\end{corollary}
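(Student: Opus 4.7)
The plan is to deduce the three assertions from Theorem~\ref{thm:KL} in sequence, each building on the previous.

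For claim (\ref{KL:coro1}), since $\lam$ is isolated in $\sigma(L_0)$ and $|\lam|>r$, I choose $\delta_1>0$ satisfying $\delta_1<|\lam|-r$ and $\Delta_{\delta_1}(\lam)\cap\sigma(L_0)=\{\lam\}$. Then for any $\delta\in(0,\delta_1]$ and any $z\in\partial\Delta_\delta(\lam)$ one has $|z|\geq|\lam|-\delta>r$ and $\dist(z,\sigma(L_0))=\delta>\delta/2$, so $\partial\Delta_\delta(\lam)$ is a compact subset of $W_{r,\delta/2}(L_0)$. Claim (\ref{KL:coro1}) follows at once from (\ref{KL:eq:spec}) of Theorem~\ref{thm:KL}, with $\eps_1$ taken no larger than $\eps_0(r,\delta/2)$. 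This makes $P_\eps^{(\lam,\delta)}$ well defined on $[0,\eps_1]$, and integrating (\ref{KL:eq:res}) along $\partial\Delta_\delta(\lam)$ yields $\tnorm{P_\eps^{(\lam,\delta)}-P_0^{(\lam,\delta)}}\leq\delta K_0\tau_\eps^\eta\to 0$ as $\eps\downarrow 0$.

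For claim (\ref{KL:coro2}), write $V_\eps=\mathrm{Range}\,P_\eps^{(\lam,\delta)}$ and $M_\eps=L_\eps|_{V_\eps}$. By construction $\sigma(M_\eps)\subset\Delta_\delta(\lam)$, and since $|\lam|>\delta$, $M_\eps$ is invertible on $V_\eps$. The holomorphic functional calculus applied to $z\mapsto z^{-n}$ then gives, for $g\in V_\eps$ and $n\in\N$,
\[
  g\;=\;M_\eps^{-n}L_\eps^n g
  \;=\;\frac{1}{2\pi\mathrm{i}}\oint_{\partial\Delta_\delta(\lam)}
        z^{-n}\,(zI-L_\eps)^{-1}\,L_\eps^n g\,dz.
\]
The idea is to combine this representation with the Lasota--Yorke inequality (\ref{KL:LY}) applied to $L_\eps^n g$, the weak-norm bound (\ref{KL:UB}), and the uniform resolvent estimate $\sup_{z\in\partial\Delta_\delta(\lam)}\tnorm{(zI-L_\eps)^{-1}}\leq B=B(r,\delta)$, which is a consequence of (\ref{KL:eq:res}) since $(zI-L_0)^{-1}$ is $\|\cdot\|$-bounded on the compact set $\partial\Delta_\delta(\lam)$. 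Using the crucial inequality $a<r<|\lam|-\delta$ forced by the choice of $\delta_1$, one iterates between the $\|\cdot\|$- and $|\cdot|_w$-estimates and chooses $n$ large, independently of $\eps\in[0,\eps_1]$, so that the $\|g\|$-contribution on the right is absorbed into the left. The outcome is $\|g\|\leq K_1|g|_w$ with $K_1=K_1(r,\delta)$. This is the principal technical step, and the main obstacle is that $\|(zI-L_\eps)^{-1}\|_{X\to X}$ is not controlled uniformly in $\eps$, only the weaker $\tnorm{\cdot}$ is; the bootstrap via the Lasota--Yorke inequality is designed precisely to circumvent this.

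For claim (\ref{KL:coro3}), both $V_0$ and each $V_\eps$ are finite-dimensional. Suppose that $\dim V_{\eps_k}>\dim V_0$ along some sequence $\eps_k\downarrow 0$. A dimension count provides a unit vector $g_k\in V_{\eps_k}$ with $P_0^{(\lam,\delta)}g_k=0$, so $g_k=(P_{\eps_k}^{(\lam,\delta)}-P_0^{(\lam,\delta)})g_k$ and therefore $|g_k|_w\leq\tnorm{P_{\eps_k}^{(\lam,\delta)}-P_0^{(\lam,\delta)}}$. Claim (\ref{KL:coro2}) then yields
\[
  1\;=\;\|g_k\|\;\leq\;K_1\,|g_k|_w\;\leq\;K_1\,\tnorm{P_{\eps_k}^{(\lam,\delta)}-P_0^{(\lam,\delta)}},
\]
contradicting the convergence $\tnorm{P_{\eps_k}^{(\lam,\delta)}-P_0^{(\lam,\delta)}}\to 0$. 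The opposite strict inequality is ruled out symmetrically, by choosing a unit $f_k\in V_0$ with $P_{\eps_k}^{(\lam,\delta)}f_k=0$ and using the finite-dimensional norm equivalence on the fixed space $V_0$.
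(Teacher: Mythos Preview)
The paper itself does not prove this corollary; its proof is simply ``See \cite[Corollary~1]{KL99}.'' Your proposal is therefore an attempt to reproduce the Keller--Liverani argument from the version of Theorem~\ref{thm:KL} stated in the paper. Your treatment of (\ref{KL:coro1}) and (\ref{KL:coro3}) is essentially correct (for (\ref{KL:coro1}) you should take $\delta_1$ small enough that, say, $\Delta_{2\delta_1}(\lambda)\cap\sigma(L_0)=\{\lambda\}$, so that $\dist(z,\sigma(L_0))=\delta$ genuinely holds for $z\in\partial\Delta_\delta(\lambda)$; as written your condition does not prevent other spectral points from lying just outside $\Delta_{\delta_1}(\lambda)$).

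The real gap is in your argument for (\ref{KL:coro2}). Your contour representation
\[
  g=\frac{1}{2\pi\mathrm{i}}\oint_{\partial\Delta_\delta(\lambda)} z^{-n}(zI-L_\eps)^{-1}L_\eps^n g\,dz
\]
is correct, and the plan to combine it with the Lasota--Yorke bound on $\|L_\eps^n g\|$ and then absorb the $a^n\|g\|$-term using $a<|\lambda|-\delta$ is exactly the Keller--Liverani strategy. But to take the $\|\cdot\|$-norm of the right-hand side you need a \emph{uniform strong-norm} bound $\sup_{z\in\partial\Delta_\delta(\lambda)}\|(zI-L_\eps)^{-1}\|_{X\to X}\leq H$, valid for all small $\eps$. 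You explicitly say you only use the $\tnorm{\cdot}$-bound $B$ on the resolvent, and that the Lasota--Yorke bootstrap ``circumvents'' the lack of strong-norm control; as described, it does not. With only $\tnorm{(zI-L_\eps)^{-1}}\leq B$ the contour integral controls $|g|_w$ in terms of $\|L_\eps^n g\|$, which yields $|g|_w\lesssim (|\lambda|-\delta)^{-n}(a^n\|g\|+A^n|g|_w)$; the $A^n$-coefficient blows up and nothing can be absorbed. The argument only closes once you have the strong-norm resolvent bound, which is indeed part of the full statement of \cite[Theorem~1]{KL99} (though omitted from the paper's streamlined Theorem~\ref{thm:KL}); Keller and Liverani obtain it by the resolvent identity $(zI-L_\eps)^{-1}=\sum_{k<n}z^{-k-1}L_\eps^k+z^{-n}(zI-L_\eps)^{-1}L_\eps^n$ together with the Lasota--Yorke inequality and the $\tnorm{\cdot}$-resolvent bound, but this is a separate nontrivial step that you would need to insert before the bootstrap for (\ref{KL:coro2}) can run.
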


\begin{proof} See \cite[Corollary~1]{KL99}.
\end{proof}

For the applications we have in mind we require the following refinement of the previous results, providing a 
quantitative bound for the behaviour of a peripheral eigenvalue $\lambda$ of $L_0$ under perturbations which are small in the norm $\tnorm{\cdot}$. 
It turns out that
the behaviour is governed by the size 
of the largest Jordan block of $L_0$ corresponding to $\lambda$, or, equivalently, 
by the order of the pole of the resolvent of $L_0$ at $\lambda$, just as in standard perturbation theory
(see, for example, \cite[Theorem~6.7]{chatelin}). 
\begin{corollary}
\label{coro:ourcoro}
Suppose that $(L_\eps)_{\eps \in E}$ is a family of bounded linear operators satisfying conditions (KL1), (KL2), 
and (KL3) above and suppose that $\lam$ with $\abs{\lam}>a$ is a pole of the resolvent of $L_0$ of order $p$. 
Fix $r$ with $a<r<\abs{\lam}$. Then there is  a constant $\delta_2=\delta_2(r)>0$ such that 
for every $\delta\in (0,\delta_2]$ there are constants  $K_2=K_2(r,\delta,p)>0$ and $\eps_2=\eps_2(r,\delta)>0$ with the following properties:

\begin{equation}
  \label{KL:ourcoro2}
  \Delta_\delta(\lam)\cap \sigma_r(L_\eps)\not =\emptyset \quad (\forall \eps\in [0,\eps_2]) \,,
\end{equation}

\begin{equation}
   \label{KL:ourcoro3}  
    \sup\all{ \abs{\lam'-\lam}  }{\lam'\in  \Delta_\delta(\lam)\cap \sigma_r(L_\eps)} 
         \leq K_2 \tau_\eps^{\eta/p} \quad (\forall \eps \in [0,\eps_2])\,,
\end{equation}
where 
\[ \eta=\frac{\log(r/a)}{\log(A/a)}\,.\]         
 \end{corollary}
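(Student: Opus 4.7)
The plan is to combine the Dunford (holomorphic) functional calculus with the quantitative resolvent bound from Theorem~\ref{thm:KL} in the $\tnorm{\cdot}$-norm. Fix $\delta_2 \in (0,\delta_1]$, with $\delta_1$ from Corollary~\ref{coro:KLcoro}, small enough that $\Delta_{\delta_2}(\lam) \cap \sigma(L_0) = \{\lam\}$ (possible since $\lam$ is isolated) and $\Delta_{\delta_2}(\lam) \cap \{z : \abs{z} \leq r\} = \emptyset$ (possible since $r<\abs{\lam}$). For $\delta \in (0,\delta_2]$, Corollary~\ref{coro:KLcoro} supplies constants $\eps_1, K_1 > 0$ such that for $\eps \in [0,\eps_1]$ the spectral projection $P_\eps := P_\eps^{(\lam,\delta)}$ is well-defined with $\mathrm{rank}(P_\eps) = \mathrm{rank}(P_0) \geq 1$ and $\norm{P_\eps f}{} \leq K_1 \wnorm{P_\eps f}$. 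Applying Theorem~\ref{thm:KL} to the compact set $\partial \Delta_\delta(\lam) \subset W_{r,\delta'}(L_0)$ for some $\delta' \in (0,\delta)$ yields $K_0 > 0$ and $\eps_2 \in (0,\eps_1]$ such that for all $z \in \partial \Delta_\delta(\lam)$ and $\eps \in [0,\eps_2]$,
\begin{equation*}
\tnorm{(zI - L_\eps)^{-1} - (zI - L_0)^{-1}} \leq K_0 \tau_\eps^\eta.
\end{equation*}

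The heart of the argument is the Dunford calculus identity
\begin{equation*}
(L_\eps - \lam I)^p P_\eps = \frac{1}{2\pi \mathrm{i}} \oint_{\partial \Delta_\delta(\lam)} (z - \lam)^p (zI - L_\eps)^{-1}\, dz.
\end{equation*}
Since $\lam$ is a pole of the resolvent of $L_0$ of order exactly $p$, one has $(L_0 - \lam I)^p P_0 = 0$; equivalently, the integrand $(z-\lam)^p (zI - L_0)^{-1}$ is holomorphic inside $\Delta_\delta(\lam)$ and the analogous $L_0$-integral vanishes by Cauchy's theorem. Subtracting yields
\begin{equation*}
(L_\eps - \lam I)^p P_\eps = \frac{1}{2\pi \mathrm{i}} \oint_{\partial \Delta_\delta(\lam)} (z - \lam)^p \bigl[ (zI - L_\eps)^{-1} - (zI - L_0)^{-1} \bigr] dz.
\end{equation*}
Bounding $\abs{z-\lam}^p \leq \delta^p$ on the contour (of length $2\pi\delta$) and using the resolvent estimate above gives the key operator bound
\begin{equation*}
\tnorm{(L_\eps - \lam I)^p P_\eps} \leq K_0 \delta^{p+1} \tau_\eps^\eta.
\end{equation*}

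Finally, since $V_\eps := \mathrm{range}(P_\eps)$ is finite-dimensional and nontrivial, $L_\eps|_{V_\eps}$ has at least one eigenvalue $\lam'$, which lies in $\Delta_\delta(\lam) \cap \sigma(L_\eps) \subset \sigma_r(L_\eps)$ by the choice of $\delta_2$, proving (\ref{KL:ourcoro2}). For any eigenvector $v \in V_\eps \setminus \{0\}$, we have $v = P_\eps v$ and $(\lam' - \lam)^p v = (L_\eps - \lam I)^p P_\eps v$. Taking the weak norm and invoking (\ref{KL:coro2}) in the form $\norm{v}{} \leq K_1 \wnorm{v}$ (which, since $v \neq 0$, also forces $\wnorm{v} > 0$) yields
\begin{equation*}
\abs{\lam' - \lam}^p \wnorm{v} \leq \tnorm{(L_\eps - \lam I)^p P_\eps} \norm{v}{} \leq K_0 K_1 \delta^{p+1} \tau_\eps^\eta \wnorm{v},
\end{equation*}
whence $\abs{\lam' - \lam} \leq K_2 \tau_\eps^{\eta/p}$ with $K_2 := (K_0 K_1 \delta^{p+1})^{1/p}$. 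The same bound holds uniformly for every $\lam' \in \Delta_\delta(\lam) \cap \sigma_r(L_\eps)$, establishing (\ref{KL:ourcoro3}). The main conceptual point, and what distinguishes this refinement from Corollary~\ref{coro:KLcoro}, is that the order-$p$ pole hypothesis is precisely what allows the factor $(z-\lam)^p$ to be inserted under the integral without disturbing the $L_0$-side; this power is then extracted as a $p$-th root on the eigenvalue side, producing the exponent $\eta/p$ rather than $\eta$.
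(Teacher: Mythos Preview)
Your proof is correct and follows essentially the same route as the paper's: both use the Dunford functional calculus identity $(L_\eps-\lam I)^pP_\eps=\frac{1}{2\pi\mathrm{i}}\oint (z-\lam)^p(zI-L_\eps)^{-1}\,dz$, observe that the corresponding $L_0$-integral vanishes because $\lam$ is a pole of order $p$, bound the difference by $K_0\delta^{p+1}\tau_\eps^\eta$ via Theorem~\ref{thm:KL}, and then extract the eigenvalue bound by applying the resulting operator to an eigenvector in $\mathrm{range}(P_\eps)$ and invoking (\ref{KL:coro2}). Even your constant $K_2=(K_0K_1\delta^{p+1})^{1/p}=\delta^{1+1/p}(K_0K_1)^{1/p}$ agrees with the paper's.
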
 

\begin{proof} Fix $r$ with $a<r<\abs{\lam}$. Choose $\delta_2(r)\leq \delta_1(r)$ so that 
 $\Delta_{\delta_2}(\lam)\cap \sigma_r(L_0)=\set{\lam}$. 
Now fix $\delta \in (0,\delta_2)$. 
Write $\eps_2(r,\delta)=\min\set{\eps_0(r,\delta),\eps_1(r,\delta)}$ and let $\eps\in [0,\eps_2]$. 

The first assertion now follows, since we have  
$  {\rm rank}\,P_\eps^{(\lam,\delta)}= {\rm rank}\,P_0^{(\lam,\delta)}>0$ by (\ref{KL:coro3}) of the previous corollary. 

In order to prove the remaining assertion write 
  \[ \Pi=\frac{1}{2\pi \rm{i}} \oint_{\partial \Delta_{\delta}(\lam)} 
      \left [ (z-\lam)^p(zI-L_\eps)^{-1} - (z-\lam)^p(zI-L_0)^{-1} \right ] \,dz\,, \] 
and observe that $\Pi$ is a bounded linear operator on $X$.  In order to conclude the proof we 
shall bound $\tnorm{\Pi}$ from above and below. We start with the upper bound. By 
Theorem~\ref{thm:KL} we have 
with  $K_0=K_0(r,\delta, \partial \Delta_\delta(\lam))$
\begin{equation}
\label{KL:ourcoro:upper}
  \tnorm{\Pi}\leq \frac{1}{2\pi} \oint_{\partial \Delta_{\delta}(\lam)} 
                         \delta^p\tnorm{(zI-L_\eps)^{-1} - (zI-L_0)^{-1}}\,\abs{dz} \\
                   \leq \delta^{p+1} K_0 \tau_\eps^\eta\,.
\end{equation}
For the lower bound, we first use the fact that 
$\lam$ is a pole of order $p$ of $z\mapsto (zI-L_0)^{-1}$ together with analytic 
functional calculus for $L_\eps$ (see, for example, \cite[Theorem~VII.3.10]{DS}) to conclude that 
\[ \Pi = \frac{1}{2\pi \rm{i}} \oint_{\partial \Delta_{\delta}(\lam)} 
            (z-\lam)^p(zI-L_\eps)^{-1} \,dz
         =(L_\eps-\lam I)^pP_\eps^{(\lam,\delta)}\,.
\] 
Let now $\lam'\in   \Delta_\delta(\lam)\cap \sigma_r(L_\eps)$. Then there is a non-zero $f'\in X$ with 
 $P_\eps^{(\lam,\delta)}f'=f'$ and $(L_\eps-\lam')f'=0$.  Thus 
\[ \Pi f'=(L_\eps-\lam I)^pf'
          =\left [ (L_\eps-\lam' I)+(\lam'-\lam)I \right ]^pf'
          =(\lam'-\lam)^pf'
          =(\lam'-\lam)^pP_\eps^{(\lam,\delta)}f'\,.
\]          
Since $f'\neq 0$ we may, with no loss of generality, assume that $\norm{f'}{}=1$. Then, using the previous equation together with (\ref{KL:coro2}) of the previous corollary, we have 
\begin{multline}
\label{KL:ourcoro:lower} 
   \tnorm{\Pi}\geq \wnorm{\Pi f'}
          =    \abs{\lam'-\lam}^p\wnorm{P_\eps^{(\lam,\delta)}f'}
        \geq \abs{\lam'-\lam}^pK_1^{-1}\norm{P_\eps^{(\lam,\delta)}f'}{}\\
          =    \abs{\lam'-\lam}^pK_1^{-1}\norm{f'}{}
          =    \abs{\lam'-\lam}^pK_1^{-1}\,.
\end{multline}
Combining (\ref{KL:ourcoro:upper}) and (\ref{KL:ourcoro:lower}) we have 
\[ \abs{\lam'-\lam}^p\leq \delta^{p+1} K_0K_1 \tau_\eps^\eta \]
and (\ref{KL:ourcoro3}) follows by setting 
$K_2=\delta^{1+1/p}(K_0 K_1)^{1/p}$.
\end{proof}

\section{Local H\"older continuity of the pressure}
\label{sec:holder}
In this section we prove our main abstract result on the regularity of the pressure. Before we start with a 
definition of pressure suitable for our purposes, we require the following notation. 
For $J$ a finite union of open subsets of $J$ we write $BV_+(J)$ for the space of real
non-negative weights on $J$ of bounded variation. 

\begin{definition}
Suppose that $T:\bigcup \Zp\to I$ is a piecewise monotonic $C^1$-map. 
Let $g\in BV_+(\bigcup \Zp)$  and 
let $\tr_g:BV(I)\to BV(I)$ denote the corresponding transfer operator. 
We then call 
   \[ P(T, g)=\log \rho (\tr_g) \]
the \emph{pressure of $T$ with weight $g$}.  
\end{definition}

An alternative expression for the pressure is given in the following theorem. 

\begin{theorem} \label{thm:bkpressure}
Let $T:\bigcup \Zp\to I$ be a piecewise monotonic $C^1$-map and $g\in
BV_+(\bigcup \Zp)$.  If either
\begin{enumerate}
\item
$g_{|Z}$ is continuous 
for each $Z\in \Zp$ and $\Zp$ is generating, or
\item
$g_{|Z}$ is constant
for each $Z\in \Zp$ ($\Zp$ need not be generating)
\end{enumerate}
then 
   \[ P(T,g)=\limsup_{n\to \infty} \frac{1}{n}
      \log \sum_{Z\in \Zp_n} \sup_Z g_n\,.\]
\end{theorem}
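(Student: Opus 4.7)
My plan is to prove the matching inequalities
\[
\rho(\tr_g)\;\leq\;\limsup_{n\to\infty} Z_n^{1/n}\quad\text{and}\quad \rho(\tr_g)\;\geq\;\limsup_{n\to\infty} Z_n^{1/n},
\]
where I abbreviate $Z_n := \sum_{Z\in\Zp_n}\sup_Z g_n$ and use $\rho(\tr_g) = \exp P(T,g)$ by definition.

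For the upper bound, the starting point is the pointwise majorisation
\[
(\tr_g^n \bfone)(x) = \sum_{Z\in\Zp_n,\,T^n Z\ni x} g_n(T_{|Z}^{-n}x) \;\leq\; Z_n\qquad(x\in I),
\]
so $\esup{\tr_g^n\bfone}{I} \leq Z_n$. The Lasota--Yorke inequality of Proposition~\ref{prop:LYI} combined with the Ionescu--Tulcea--Marinescu theorem (Remark~\ref{rem:ITM}) bounds the essential spectral radius by $\rhoess(\tr_g)\leq \limsup_n a_n^{1/n}\leq \exp(\Theta(T,g))$; since $Z_n\geq \sup_{Z\in\Zp_n}\esup{g_n}{Z}=\exp(n\Theta_n(T,g))$, this in turn is dominated by $\limsup_n Z_n^{1/n}$. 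For any isolated eigenvalue $\lam$ of $\tr_g$ with eigenvector $h\in BV(I)\subset L^\infty(I)$, positivity of $\tr_g$ yields
\[
|\lam|^n|h(x)| = |\tr_g^n h(x)| \leq (\tr_g^n|h|)(x) \leq \esup{h}{I}\,(\tr_g^n\bfone)(x) \leq \esup{h}{I}\,Z_n,
\]
and evaluating at points where $|h(x)|$ is arbitrarily close to $\esup{h}{I}$ gives $|\lam|\leq \limsup_n Z_n^{1/n}$. Combined with the essential bound this proves $\rho(\tr_g)\leq \limsup_n Z_n^{1/n}$.

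The lower bound is the more delicate direction. The natural route is through a $\rho(\tr_g)$-conformal Borel probability measure $\nu$ on $I$, that is, a positive eigenmeasure of the dual operator $\tr_g^{\ast}$. Paired against $BV$ test functions adapted to the cylinders of $\Zp_n$, such an eigenmeasure delivers matching lower bounds on $\rho(\tr_g)^n$ in terms of $Z_n$. This is precisely where hypotheses (1) and (2) enter: under (2), $g_n$ is constant on each $Z\in\Zp_n$, so indicator-type test functions directly extract $g_n(Z)$ from $\tr_g^n$; under (1), the generating property of $\Zp$ together with the continuity of $g_{|Z}$ lets one approximate $\sup_Z g_n$ by point values $g_n(x_Z)$ on refined cylinders of controlled $BV$-norm. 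The main obstacle is the construction of the conformal eigenmeasure $\nu$ and the uniform control of these approximations across cylinders; this is the heart of the classical Baladi--Keller analysis of transfer operators for piecewise monotonic interval maps \cite{BK}, and the cleanest completion of the proof is to invoke their results directly for the lower bound.
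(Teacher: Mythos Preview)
Your upper bound $\rho(\tr_g)\leq\limsup_n Z_n^{1/n}$ is correct and in fact more explicit than what the paper gives: the paper simply says this direction ``follows as in the proof of \cite[Theorem~3]{BK}'', whereas your eigenvalue/positivity argument combined with the essential spectral radius bound of Lemma~\ref{lem:ress} is self-contained and works uniformly under both hypotheses~(1) and~(2).

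The gap is in the other inequality, and it is precisely where the two hypotheses diverge. In case~(1) your deferral to \cite{BK} is fine, and the paper does the same. But in case~(2) the partition $\Zp$ is \emph{not} assumed generating, so the Baladi--Keller machinery (in particular the conformal eigenmeasure construction you invoke) is not available, and your sketch does not close. The paper handles case~(2) by an elementary counting device you have missed: the set of distinct images $\calA_n=\{T^nZ:Z\in\Zp_n\}$ is contained in the Hofbauer tower and therefore has cardinality at most $2n\,\Card\Zp_1$. Choosing one point $x_A\in A$ per $A\in\calA_n$, constancy of $g_n$ on cylinders gives
\[
Z_n=\sum_{Z\in\Zp_n}g_{n|Z}\;\leq\;\sum_{A\in\calA_n}(\tr_g^n\bfone)(x_A)\;\leq\;2n\,\Card\Zp_1\,\|\tr_g^n\|_{BV(I)},
\]
since every $Z$ with $T^nZ=A$ contributes $g_{n|Z}=g_n(T_{|Z}^{-n}x_A)$ to the sum defining $(\tr_g^n\bfone)(x_A)$. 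Taking $n$-th roots yields $\limsup_n Z_n^{1/n}\leq\rho(\tr_g)$. So in case~(2) you have the difficulty of the two directions reversed: the one you call ``more delicate'' is exactly the one the paper dispatches with this short argument, no eigenmeasure required.
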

\begin{proof}
The first case is proved as in \cite[Theorem~3]{BK}. In the second case
we note that $g_{n|Z}$ is constant for each $Z\in\Zp_n$.
The set $\calA_n = \{ T^n Z : Z\in \Zp_n\}$ is a subset of the so-called
Hofbauer tower so has cardinality 
$\Card \calA_n \leq 2 n \Card \Zp_1$ (see, for example, \cite[Section~3]{BK}).   
For each
$A\in\calA_n$, pick $x_A\in A$. Then
\[ \Omega_n
    := \sum_{Z\in \Zp_n} g_{n|Z}
    \leq  \sum_{A\in\calA_n} \sum_{y\in T^{-n}x_A} g_n(y) 
    = \sum_{A\in\calA_n} \left( \tr_g^n  \bfone \right) (x_A),\]
so $\Omega_n \leq 2n \Card Z_1 \|\tr_g^n \|$. It
follows that $\limsup_{n\to \infty} \Omega_n^{1/n} \leq \rho(\tr_g)$.
The reverse inequality follows as in the proof of
\cite[Theorem~3]{BK}.
\end{proof}

Before we continue we summarise some consequences of the results in Section~\ref{sec:setup}. 
We start with the following well-known facts already contained in \cite{Keller, BK}, the simple proofs 
of which we give for the convenience of the reader. 

\begin{lemma}
\label{lem:ress}
Let $T:\bigcup \Zp \to I$ be a piecewise monotonic $C^1$-map and 
$g\in BV(\bigcup \Zp)$. Then 
\[ \rho_{\rm ess}(\tr_g)\leq \exp(\Theta(T,g))\,.\]
\end{lemma}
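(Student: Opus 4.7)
My plan is to combine the Lasota-Yorke inequality of Proposition~\ref{prop:LYI} with a Hennion/Nussbaum-type formula for the essential spectral radius, and then control the asymptotic growth of the constants $a_n$ using Lemma~\ref{lem:vargn}.

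First I would recall that the inclusion $BV(I) \hookrightarrow L^1(I)$ is compact (as noted in the excerpt right after the definition of $BV(J)$). Combined with the Lasota-Yorke inequality
\[ \norm{\tr_g^n f}{BV(I)} \leq a_n \norm{f}{BV(I)} + A_n \norm{f}{L^1(I)}, \]
this puts us exactly in the setting where Hennion's theorem (the quantitative refinement of Ionescu Tulcea--Marinescu) yields
\[ \rhoess(\tr_g) \leq \liminf_{n\to\infty} a_n^{1/n}. \]
So the essential spectral radius is controlled by the asymptotic growth rate of the ``rough'' coefficient $a_n = \sup_{Z\in \Zp_n}(3\esup{g_n}{Z} + \var_Z(g_n))$.

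Next I would estimate $a_n^{1/n}$. Fix $\beta > \exp(\Theta(T,g))$. By Lemma~\ref{lem:vargn}, there are constants $M = M(T,g,\beta) \geq 1$ and $\Gamma = \Gamma(T,g)$ such that for every $n\in\N$ and every $Z\in\Zp_n$,
\[ \esup{g_n}{Z} \leq M\beta^n \qquad \text{and} \qquad \var_Z(g_n) \leq n\,\Gamma\, M^2 \beta^{n-1}. \]
Substituting into the definition of $a_n$,
\[ a_n \leq 3M\beta^n + n\,\Gamma\, M^2 \beta^{n-1} = \beta^n\bigl(3M + n\Gamma M^2/\beta\bigr). \]
The bracketed term grows only polynomially in $n$, so $\limsup_{n\to\infty} a_n^{1/n} \leq \beta$. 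Since $\beta > \exp(\Theta(T,g))$ was arbitrary, this gives $\liminf_{n\to\infty} a_n^{1/n} \leq \exp(\Theta(T,g))$, and the bound on $\rhoess(\tr_g)$ follows.

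There is no real obstacle here: the result is essentially a packaging of a standard Hennion-type argument together with the explicit Lasota-Yorke constants already prepared. The only thing that requires a moment's care is checking that one does not need to control $A_n$ — indeed, Hennion's theorem uses only the coefficient in front of the \emph{strong} norm, while the $A_n$ term (involving the weak $L^1$ norm) is absorbed by the compact embedding. Once that is clear, the rest is a direct computation.
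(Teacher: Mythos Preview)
Your proposal is correct and follows essentially the same route as the paper: invoke the Lasota--Yorke inequality of Proposition~\ref{prop:LYI}, use Lemma~\ref{lem:vargn} to bound $a_n$ by $\beta^n$ times a polynomial factor for any $\beta>\exp(\Theta(T,g))$, and then apply Hennion's theorem via the compact embedding $BV(I)\hookrightarrow L^1(I)$ to conclude $\rhoess(\tr_g)\leq\beta$. The paper's version is simply more terse, leaving the explicit estimate of $a_n$ implicit.
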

\begin{proof}
Fix $\beta>\exp(\Theta(T,g))$. Proposition~\ref{prop:LYI} and Lemma~\ref{lem:vargn} imply that 
the transfer operator $\tr_g$ satisfies 
a Lasota-Yorke inequality of the form 
\[ \norm{\tr_g^nf}{BV(I)} \leq a_n \norm{f}{BV(I)} + A_n\norm{f}{L^1(I)} \quad (\forall n\in \N, \forall f \in BV(I))\,,\]
 with
 \[ \limsup_{n\to\infty} a_n^{1/n} \leq \beta\,.\]

Since the embedding $BV(I)\hookrightarrow L^1(I)$ is compact, 
it follows, for example by \cite{HH}, that $\rho_{\rm ess}(\tr_g)\leq \beta$.  
Since $\beta>\exp(\Theta(T,g))$ was arbitrary, the assertion is proved. 
\end{proof}

\begin{lemma}
Let $T:\bigcup \Zp \to I$ be a piecewise monotonic $C^1$-map and 
$g\in BV_+(\bigcup \Zp)$. 
If $P(T, g)> \Theta(T,g)$,  
then $\exp(P(T,g))$ is a pole of the resolvent of $\tr_g$ and in
particular an eigenvalue of $\tr_g$.  
\end{lemma}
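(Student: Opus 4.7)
The plan rests on two ingredients: the essential spectral radius bound from the preceding lemma, and Pringsheim's theorem, exploiting positivity of $\tr_g$. Specifically, the preceding lemma together with the hypothesis gives $\rho_{\rm ess}(\tr_g) \leq \exp(\Theta(T,g)) < \exp(P(T,g)) = \rho(\tr_g)$, so the resolvent $R(z) = (zI-\tr_g)^{-1}$ is meromorphic on $\{z \in \C : |z| > \rho_{\rm ess}(\tr_g)\}$ with every spectral point there an isolated pole of finite rank. It therefore suffices to show that $\rho(\tr_g)$ itself lies in $\sigma(\tr_g)$.

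Next, for a non-negative $f \in BV_+(I)$ and a positive linear functional $\phi$ on $BV(I)$ (for instance, integration against a suitable non-negative $L^\infty$ density), I would consider the scalar function $F(z) = \phi(R(z)f)$. Since $g\geq 0$ forces $\tr_g^n f \geq 0$, the Laurent expansion $F(z) = \sum_{n \geq 0} z^{-n-1} \phi(\tr_g^n f)$, valid for $|z| > \rho(\tr_g)$, has \emph{non-negative} coefficients. Pringsheim's theorem (after the substitution $w = 1/z$) then guarantees that the positive real boundary point of the disk of convergence is a singularity of $F$. Provided $f$ and $\phi$ can be chosen so that $\limsup_n \phi(\tr_g^n f)^{1/n} = \rho(\tr_g)$, this singularity sits exactly at $z = \rho(\tr_g)$; since $F$ is meromorphic on $\{|z| > \rho_{\rm ess}(\tr_g)\}$, the singularity must in fact be a pole of $R$, and consequently $\exp(P(T,g))$ is a pole of the resolvent of $\tr_g$, in particular an eigenvalue.

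The crucial point is the existence of positive $f, \phi$ witnessing the full spectral radius, and this is where I expect the main difficulty. The idea is a uniform boundedness argument on the positive cone: any $f \in BV(I)$ decomposes as $f = f^+ - f^-$ with the elementary bounds $\var_I(f^\pm) \leq \var_I(f)$ and $\int|f^\pm|\,dm \leq \int|f|\,dm$, giving $\|f^\pm\|_{BV(I)} \leq \|f\|_{BV(I)}$. Combined with a Banach--Steinhaus argument applied to $\{r^{-n}\tr_g^n\}$ tested against positive $f, \phi$, failure to witness $\rho(\tr_g)$ from the positive cone would yield $\sup_n r^{-n}\|\tr_g^n\|_{BV\to BV} < \infty$ for some $r < \rho(\tr_g)$, contradicting the spectral radius formula. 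The delicate aspect is that $BV(I)$ is not a Banach lattice (its positive cone is not normal under the $BV$-norm), so standard Krein--Rutman theorems do not apply off the shelf; the elementary bound $\|f^\pm\|_{BV(I)} \leq \|f\|_{BV(I)}$ is precisely what is needed to bypass this and transfer operator-norm estimates from the whole space to the positive cone.
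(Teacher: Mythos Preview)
Your overall strategy (Pringsheim via positivity) is sound; it is essentially the proof of the abstract result the paper simply cites, namely Schaefer's theorem \cite[Appendix~2.4]{schaefer} that a positive operator on an ordered Banach space with \emph{total} positive cone has its spectral radius as a resolvent pole whenever some pole lies on the spectral circle.

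The gap is in your Banach--Steinhaus step for producing the witnessing pair $(f,\phi)$. Two problems arise. First, ``failure to witness $\rho(\tr_g)$'' yields only $\limsup_n \phi(\tr_g^n f)^{1/n}<\rho(\tr_g)$ for each positive $f,\phi$ separately, with no uniform $r<\rho(\tr_g)$, so you cannot apply uniform boundedness to $\{r^{-n}\tr_g^n\}$ for any fixed $r$. Second, the positive functionals you name (integration against $L^\infty_+$ densities) control only the $L^1$-part of the $BV$-norm, so even a successful conclusion would bound $\|\tr_g^n\|_{BV\to L^1}$ rather than $\|\tr_g^n\|_{BV\to BV}$; bridging this requires the Lasota--Yorke inequality, which you do not invoke. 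A cleaner route avoids uniform boundedness entirely: you already know $R(z)$ has a pole at some $\lambda$ with $|\lambda|=\rho(\tr_g)$, with nonzero leading Laurent coefficient $A$. The decomposition $f=f^+-f^-$ you mention shows that $BV_+$ spans $BV$, so some $f\in BV_+$ has $Af\neq 0$; and non-negative $L^\infty$ densities separate points of $L^1\supset BV$, so some positive $\phi$ has $\phi(Af)\neq 0$. Then $z\mapsto\phi(R(z)f)$ already has a singularity at $\lambda$, hence its power series in $1/z$ has radius of convergence exactly $1/\rho(\tr_g)$, and Pringsheim applies directly.
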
 

\begin{proof}
This follows from a positivity argument. 
Consider the real Banach space $BV(I)_\R=\all{f:I\to \R}{
  \norm{f}{BV(I)}<\infty }$ and 
observe that 
$BV_+(I)$ is a total cone, that is, a cone the span of which is dense in $BV(I)_\R$. 
Since $g\geq 0$ the transfer operator leaves this cone invariant. 
Moreover, since  $P(T, g)> \Theta(T,g)$, the previous lemma implies 
that $\rho(\tr_g)>\rho_{\rm ess}(\tr_g)$, so the 
resolvent of $\tr_g$ must have a pole on the circle 
centred at $0$ with radius 
$\rho(\tr_g)$. Using \cite[Appendix 2.4]{schaefer}) 
it now follows that $\rho(\tr_g)$ is a pole of the resolvent of 
$\tr_g$ on $BV(I)_\R$ and the assertion of the lemma is 
proved since the complex Banach space $BV(I)$
is equivalent to any
standard complexification of $BV(I)_\R$.
 \end{proof}

\begin{remark}
This also follows from \cite[Theorem~2]{BK}. The above proof, however, is more direct. 
\end{remark}

\begin{definition}
\label{def:Holder}
When $(X,d_X)$ and $(Y,d_Y)$ are metric spaces, a function
$h: X \rr Y$
is  said to be 
\emph{H\"older continuous at $t\in X$}
if there is $\alpha\in (0,1]$ such that 
  \[ \limsup_{s\to t} \frac{ d_Y(h(s),h(t)) }{d_X(s,t)^\alpha}<\infty\,.\]   
In case such an $\alpha$ exists, we call 
  \[ \Hol(h,t)= 
        \sup\all{\alpha\in (0,1]}{%
                 \limsup_{s\to t}
                 \frac{ d_Y(h(s),h(t))}{d_X(s,t)^\alpha}<\infty}
  \]
the \emph{local H\"older exponent of $h$ at $t$}. 
\end{definition}

In the following we consider a fixed piecewise
monotonic $C^1$-map $T:\bigcup \Zp\to I$ and a family 
$(g_s)_{s\in \mathcal{S}}$ of weights.  
We shall be interested in the regularity of $s\mapsto P(T,g_s)$. 
The following is our abstract main result. 

\begin{theorem}
\label{thm:main}
Suppose that $T:\bigcup \Zp\to I$ is a 
piecewise monotonic  $C^1$-map  with $0 < \Xi(T)<+\infty$.
Let $t\in \R$ and let $\mathcal S$ be a neighbourhood of $t$. 
Suppose that $(g_s)_{s\in \mathcal{S}}$ is a family of 
non-negative weights on $\bigcup \Zp$ of bounded variation
and  that there is $\Theta \in \R$
with the following properties
    (see Lemma \ref{lem:vargn} for definitions):
\begin{itemize}
   \item[(i)] 
       $\ds \limsup_{s\to t} \Theta(T,g_s) \leq \Theta$ 
          and 
          $\ds \limsup_{s\to t} \ \Gamma(T,g_s)<+\infty$;\\
   \item[(ii)] 
          $\ds \limsup_{s\to t} \
            M(T,g_s,\beta)<+\infty $
   whenever $\beta>\exp \Theta$;\\
   \item[(iii)] $\norm{g_s-g_t}{L^1(\bigcup \Zp)}\leq \tau_{s-t}$, where $\tau$
     is a monotonic upper-semicontinuous function defined in a
     neighbourhood of $0$ with $\lim_{\eps\to 0}\tau_\eps=0$. 
\end{itemize}
      If $\Theta < P(T,g_t)$,
   then for every 
      \begin{equation}  
          0< \eta< 
    \eta_0 := \frac{P(T,g_t)-\Theta}{\Xi(T)} 
    \end{equation}
    there is a constant $K'$ and a
neighbourhood $\mathcal{S'}$ of $t$ such that
   \begin{equation}
   \abs{P(T,g_s)-P(T,g_t)}\leq
   K' \tau_{s-t}^{\eta/p} \quad (\forall s\in \mathcal{S'})\,,
   \label{holderineq}
   \end{equation}
where 
 $p$ is the order of the pole of the resolvent of $\tr_{g_{t}}$ at
 $\exp(P(T,g_t))$. \\
\end{theorem}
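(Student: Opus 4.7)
The plan is to apply Corollary~\ref{coro:ourcoro} to the family $(\tr_{g_s})_{s\in\mathcal{S}}$ viewed as operators on $BV(I)$ with $L^1(I)$ playing the role of the weaker auxiliary norm. First I would check that the three Keller--Liverani hypotheses (KL1)--(KL3) hold uniformly in $s$ near $t$. For (KL1), fixing any $\beta>\exp\Theta$, hypotheses (i) and (ii) provide uniform upper bounds on $\Theta(T,g_s)$, $\Gamma(T,g_s)$ and $M(T,g_s,\beta)$ for $s$ in a neighbourhood of $t$. Proposition~\ref{prop:LYI} combined with Lemma~\ref{lem:vargn} then bounds the Lasota--Yorke constants $a_n$ and $A_n$ by polynomial-in-$n$ multiples of $\beta^n$ and $\beta^n\exp(n\Xi_n(T))$ respectively, uniformly in $s$. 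Absorbing the polynomial factors into constants, for any $a>\beta$ and $A>\beta\exp(\Xi(T))$ both Lasota--Yorke inequalities of (KL1) hold uniformly. Condition (KL2) is immediate from Remark~\ref{rem:ITM} since the embedding $BV(I)\hookrightarrow L^1(I)$ is compact. Finally, Lemma~\ref{lem:lg_kl3} applied to $g_s-g_t$ together with hypothesis (iii) yields $\tnorm{\tr_{g_s}-\tr_{g_t}}\leq C\,\tau_{s-t}$ for some constant $C$, which is (KL3).

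The next step is an optimal choice of parameters. Since $P(T,g_t)>\Theta\geq\Theta(T,g_t)$, the positivity lemma preceding Definition~\ref{def:Holder} implies that $\lambda:=\exp P(T,g_t)$ is a pole of the resolvent of $\tr_{g_t}$ of order $p$, and $|\lambda|>a$ whenever $a$ is chosen close to $\exp\Theta$. The key quantitative observation is that by taking $\beta\downarrow\exp\Theta$, $a\downarrow\beta$, $A\downarrow\beta\exp\Xi(T)$ and $r\uparrow\exp P(T,g_t)$, the Keller--Liverani exponent $\eta=\log(r/a)/\log(A/a)$ can be made arbitrarily close to, but strictly less than,
\[ \eta_0=\frac{P(T,g_t)-\Theta}{\Xi(T)}. \]
Given any prescribed $\eta\in(0,\eta_0)$, fixing admissible parameters realising this exponent and applying Corollary~\ref{coro:ourcoro} produces constants $\delta,\eps_2,K_2>0$ such that for every $s$ with $|s-t|\leq\eps_2$, every spectral point $\lambda'$ of $\tr_{g_s}$ in $\Delta_\delta(\lambda)$ satisfies $|\lambda'-\lambda|\leq K_2\,\tau_{s-t}^{\eta/p}$.

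The main subtlety, which I expect to be the chief obstacle, is to identify $\rho(\tr_{g_s})=\exp P(T,g_s)$ with the modulus of one such perturbed spectral point. By the first conclusion (\ref{KL:eq:spec}) of Theorem~\ref{thm:KL}, for $|s-t|$ small the spectrum of $\tr_{g_s}$ in $\{|z|>r\}$ is confined to small neighbourhoods of the finitely many points of $\sigma(\tr_{g_t})\cap\{|z|>r\}$, with the rank of each associated spectral projection preserved by (\ref{KL:coro3}). Since $\lambda$ strictly exceeds the modulus of every other such point of $\sigma(\tr_{g_t})$, for $|s-t|$ sufficiently small the perturbed spectral point of maximal modulus must lie in the cluster around $\lambda$; positivity of $\tr_{g_s}$ (from $g_s\geq 0$) together with standard Perron--Frobenius arguments ensures the spectral radius is attained in the spectrum. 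Consequently $\rho(\tr_{g_s})=|\lambda_s|$ for some $\lambda_s\in\Delta_\delta(\lambda)\cap\sigma(\tr_{g_s})$, and hence
\[ \bigl|\rho(\tr_{g_s})-\lambda\bigr| = \bigl||\lambda_s|-|\lambda|\bigr| \leq |\lambda_s-\lambda| \leq K_2\,\tau_{s-t}^{\eta/p}. \]
Applying the local Lipschitz continuity of $\log$ at $\lambda>0$ converts this into the desired bound (\ref{holderineq}) with a suitable constant $K'$.
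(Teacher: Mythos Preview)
Your proposal follows essentially the same route as the paper's proof: verify (KL1)--(KL3) uniformly using Proposition~\ref{prop:LYI}, Lemma~\ref{lem:vargn} and Lemma~\ref{lem:lg_kl3}, optimise the parameters so that the Keller--Liverani exponent approaches $\eta_0$, and then apply Corollary~\ref{coro:ourcoro}. The paper fixes concrete intermediate constants $\Theta<\Theta'<R<P(T,g_t)$ and $\Xi'>\Xi(T)$ realising a prescribed $\eta<\eta_0$ rather than describing a limiting procedure, but this is only a difference of presentation.

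There is one small gap in your treatment of the ``main subtlety''. You assert that $\lambda=\exp P(T,g_t)$ strictly exceeds the modulus of every other spectral point of $\tr_{g_t}$ outside $\{|z|\leq r\}$; nothing in the paper establishes this, and it need not hold for positive operators in general (other peripheral eigenvalues of equal modulus may occur). Fortunately the claim is unnecessary. The paper's more direct argument runs as follows: spectral confinement (Theorem~\ref{thm:KL}) gives $\rho(\tr_{g_s})\leq\lambda_t+\delta$; rank preservation (Corollary~\ref{coro:KLcoro}) produces a spectral point of $\tr_{g_s}$ in $\Delta_\delta(\lambda_t)$, so $\rho(\tr_{g_s})\geq\lambda_t-\delta>a$; and then the positivity lemma preceding Definition~\ref{def:Holder}, applied now to $\tr_{g_s}$ (which is legitimate since $\rho(\tr_{g_s})>a\geq\rho_{\rm ess}(\tr_{g_s})$), guarantees that the real positive number $\rho(\tr_{g_s})$ is itself a spectral point. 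Hence $\rho(\tr_{g_s})\in\Delta_\delta(\lambda_t)\cap\sigma(\tr_{g_s})$ directly, and Corollary~\ref{coro:ourcoro} applies with $\lambda'=\rho(\tr_{g_s})$. This sidesteps entirely the question of which cluster contains the point of maximal modulus.
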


\begin{proof}
Fix $\eta<\eta_0$. We may choose  constants satisfying
$\Theta  < \Theta' < R < P(T,g_t)$ and $\Xi'>\Xi(T)$ such that 
$\eta=\frac{R-\Theta'}{\Xi'}$. Now fix $\beta$ with 
$\exp(\Theta) < \beta < \beta':=\exp(\Theta')$. 

By possibly shrinking $\mathcal S$ we may, by (i) and (ii), assume that
$\sup_{s\in \mathcal S} M(T,g_s,\beta) \leq M < +\infty$ and
$\sup_{s\in \mathcal S} \Gamma(T,g_s) \leq \Gamma < +\infty$.
We thus have 
   \[ \esup{g_{s,n}}{Z} \leq M \exp( n\Theta' ) \quad (\forall s\in \mathcal{S}, \forall n\in \N, \forall Z\in \Zp_n)\,. \]
Moreover, 
by Lemma \ref{lem:vargn} we have 
$\var_{Z} (g_{s,n}) \leq n \Gamma M^2 \beta^{n-1}$ for all 
$s\in \mathcal S$ and all $n\in\N$, 
$Z\in \Zp_n$. 
As $\beta'>\beta$ we may absorb the factor $n$ and conclude that there is 
a constant $C'$ not depending on $s$ such that 
    \[ \var_Z (g_{s,n}) \leq C' \exp(n\Theta')  \quad (\forall s\in \mathcal{S}, \forall n\in \N, \forall Z\in \Zp_n)\,. \]
Furthermore, by the definition of $\Xi(T)$ there is a constant $C''$ such that 
    \[ \frac{\esup{D(T^n)}{Z}}{\makebox{$m(T^nZ)$}}\leq C'' \exp(n\Xi')  
           \quad (\forall n\in \N, \forall Z\in \Zp_n)\,.\]
 Hence we also have 
    \[ \esup{D(T^n)}{Z} \leq m(I)\,C'' \exp(n\Xi')  
           \quad (\forall n\in \N, \forall Z\in \Zp_n)\,.\]
We can thus bound the constants $a_n$ and $A_n$ occurring in 
Proposition~\ref{prop:LYI} by  
\[ a_n\leq C_2 \exp(n\Theta')\,, \quad A_n\leq C_3 \exp(n\Theta')\exp(n\Xi') \]
where $C_2=3M+C'$ and $C_3=(2M+m(I)M+C')C''$, which shows that  
condition (KL1) of the Keller-Liverani Theorem is 
satisfied with 
$a=\exp(\Theta')$  and $A=\exp(\Theta'+\Xi')$ for the family 
$(\tr_{g_{t+\eps}})_{\eps\in E}$ with $E$ a small neighbourhood of $0$. 

Condition (KL2) follows from Remark~\ref{rem:ITM} together with the 
compactness of the embedding $BV(\bigcup \Zp) \hookrightarrow L^1(\bigcup \Zp)$. 

By Lemma~\ref{lem:lg_kl3}, there is a constant $C$ such 
that 
    \[ \tnorm{\tr_{g_s}-\tr_{g_t}}{}=\tnorm{\tr_{g_s-g_t}}{}
    \leq C\norm{g_s-g_t}{L^1(\bigcup \Zp)} \leq C \tau_{s-t}\,, \]
so condition (KL3) of the Keller-Liverani Theorem is also satisfied. 

Since $\lam_s=\exp(P(T,g_s))$ is the largest real  
eigenvalue of $\tr_{g_s}$ we have 
$\lim_{s\to t}\lam_s=\lam_t$. 
  In order to see this, fix $r=\exp(R)$ and let $\delta\in (0, (\lam_t-r)/2)$. Observing 
  that by 
  (\ref{KL:eq:spec}) of 
  Theorem~\ref{thm:KL}
  we have $\lambda_s\leq \lambda_t+\delta$ whenever
  $|s-t|\leq \eps_0(r,\delta)$ as $\sigma(\tr_{g_s})$ is contained   
  in a $\delta$-neighbourhood of $\sigma_r(\tr_{g_t})$.
  Similarly,
  by (\ref{KL:coro3}) of Corollary~\ref{coro:KLcoro} we have
  $\lambda_t-\delta\leq \lambda_s$ for 
  $|s-t|\leq \eps_1(r,\delta)$ since there exists an eigenvalue
  of $\tr_{g_s}$ in a $\delta$-neighbourhood of $\lambda_t$.

To conclude the proof we again set $r = \exp(R)$ 
and observe that, by Corollary~\ref{coro:ourcoro}, 
we can choose a $\delta>0$ and a neighbourhood 
$\mathcal{S'}$ of $t$ so that 
$\lam_s\in \Delta_\delta(\lam_{t})$ for all $s\in \mathcal{S'}$ and 
   \[ \abs{\log \lam_s-\log \lam_{t}} 
    \leq \frac{1}{\min\set{\lam_s,\lam_t}} \abs{\lam_s-\lam_t} 
    \leq \exp(-\Theta ) K_2 (C \tau_{t-s})^{\eta/p} 
     \quad (\forall s\in \mathcal{S'})\,, 
   \] 
where 
   \[ \eta=\frac{R-\Theta'}{\Xi'}\,,\]
and $K_2$ is the constant in Corollary~\ref{coro:ourcoro}. 
\end{proof}

\begin{corollary}
\label{coro:main}
Under the hypotheses of the previous theorem:
 \begin{enumerate}[(a)]
   \item \label{coro:main:a}
     if $s\mapsto g_s\in L^1(\bigcup \Zp)$ is 
         continuous at $t$, 
          then so is the map $s\mapsto \max\{P(T,g_s), \Theta\}$;
   \item \label{coro:main:b}
     if  $P(T,g_t)>\Theta$ and $s\mapsto g_s\in L^1(\bigcup \Zp)$ is 
         Lipschitz continuous at $t$
         then $s\mapsto P(T, g_{s})$ is 
         H\"older continuous at $t$ with local H\"older exponent 
         satisfying 
          \[ \Hol(P(T,g), t)\geq 
                  \frac{P(T,g_{t})-\Theta }{ p  \; \Xi(T)}\,, \]  
         where $p$ is the order of the pole of the resolvent 
         of $\tr_{g_{t}}$ at $\exp(P(T,g_t))$. 
 \end{enumerate}
\end{corollary}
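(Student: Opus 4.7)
The plan is to deduce both parts from Theorem~\ref{thm:main}, supplementing it with a direct application of the Keller--Liverani Theorem in the one regime it does not cover, namely the case $P(T,g_t)\leq\Theta$ in part~(\ref{coro:main:a}).

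For part~(\ref{coro:main:b}), the Lipschitz hypothesis provides $L>0$ with $\|g_s-g_t\|_{L^1(\bigcup\Zp)}\leq L|s-t|$ in a neighbourhood of $t$, so $\tau_\eps:=L|\eps|$ is admissible in condition~(iii) of Theorem~\ref{thm:main}. The conclusion (\ref{holderineq}) then reads $|P(T,g_s)-P(T,g_t)|\leq K' L^{\eta/p}|s-t|^{\eta/p}$ for every $\eta<\eta_0=(P(T,g_t)-\Theta)/\Xi(T)$, whence $\Hol(P(T,g),t)\geq \eta/p$. Letting $\eta\uparrow\eta_0$ gives the asserted lower bound.

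For part~(\ref{coro:main:a}), I would split into two subcases. If $P(T,g_t)>\Theta$, setting $\tau_\eps:=\sup\{\|g_\sigma-g_t\|_{L^1(\bigcup\Zp)}:|\sigma-t|\leq|\eps|\}$ yields a monotonic upper-semicontinuous function tending to $0$ by the $L^1$-continuity hypothesis, and Theorem~\ref{thm:main} gives $P(T,g_s)\to P(T,g_t)$. Since the strict inequality $P(T,g_s)>\Theta$ persists in a neighbourhood of $t$, this implies $\max\{P(T,g_s),\Theta\}\to \max\{P(T,g_t),\Theta\}$.

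The remaining subcase $P(T,g_t)\leq\Theta$ is where Theorem~\ref{thm:main} no longer applies and constitutes the main obstacle. It suffices to show $\limsup_{s\to t}P(T,g_s)\leq\Theta$, since then $\max\{P(T,g_s),\Theta\}$ is squeezed between $\Theta$ and $\Theta+o(1)$, both of which equal $\max\{P(T,g_t),\Theta\}$ in the limit. To prove this, I would fix $\epsilon>0$, pick $\Theta'\in(\Theta,\Theta+\epsilon/2)$, and invoke Theorem~\ref{thm:KL} for the family $(\tr_{g_s})$ with the uniform Lasota--Yorke setup of Proposition~\ref{prop:LYI} (derived from hypotheses~(i) and~(ii) exactly as in the proof of Theorem~\ref{thm:main}), yielding (KL1) with $a=\exp(\Theta')$ and $A=\exp(\Theta'+\Xi')$ for some $\Xi'>\Xi(T)$; (KL3) follows from Lemma~\ref{lem:lg_kl3} together with the $L^1$-continuity. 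Taking $r:=\exp(\Theta+\epsilon)\in(a,A)$ and any $\delta\in(0,r-\exp(\Theta))$, the inclusion $\sigma(\tr_{g_t})\subset\{|z|\leq\exp(\Theta)\}$ forces $\{z:|z|>r\}\subset W_{r,\delta}(\tr_{g_t})$. By~(\ref{KL:eq:spec}) this set lies in $\varrho(\tr_{g_s})$ for $s$ close enough to $t$, so $\rho(\tr_{g_s})\leq r$, i.e.\ $P(T,g_s)\leq\Theta+\epsilon$. As $\epsilon>0$ was arbitrary, the claim follows.
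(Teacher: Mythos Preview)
Your proposal is correct and follows essentially the same route as the paper. Part~(\ref{coro:main:b}) is deduced from inequality~(\ref{holderineq}) by taking $\tau_\eps$ linear and letting $\eta\uparrow\eta_0$, exactly as the paper indicates; for part~(\ref{coro:main:a}) the paper likewise reduces to showing $\limsup_{s\to t}P(T,g_s)\leq\Theta$ when $P(T,g_t)\leq\Theta$ and establishes this via the spectral inclusion~(\ref{KL:eq:spec}) of Theorem~\ref{thm:KL} with $r=\exp(\Theta+\varepsilon)$, which is precisely your argument (the paper restricts to $\varepsilon\in(0,\Xi(T))$ to guarantee $r\in(a,A)$, whereas you achieve the same by allowing $\Xi'$ to be chosen sufficiently large---both are valid).
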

\begin{proof}
Part (b) follows immediately from (\ref{holderineq}) in our
previous theorem by taking suitable limits.
In order to show (a) it suffices to prove that if 
$P(T,g_t) \leq \Theta$ then 
also $\limsup_{s\to t} P(T,g_s) \leq \Theta$.
Hence, assume that  
$P(T,g_t) \leq \Theta$ and let $\varepsilon \in (0,\Xi(T))$. The spectrum of
$\tr_{g_t}$ is contained in the disk $\all{z\in \C}{\abs{z}\leq \exp(\Theta)}$. 
Choosing $r=\exp(\Theta+\varepsilon)$ 
and $\delta >0$ so that $\exp(\Theta)+\delta \leq \exp(\Theta + \varepsilon)$ in 
Theorem~\ref{thm:KL}, inclusion (\ref{KL:eq:spec}) implies that 
  if $|s-t|\leq \eps_0(r,\delta)$ then 
  $\sigma(\tr_{g_s})$ is contained in the disk $\all{z\in \C}{\abs{z}\leq \exp(\Theta+\varepsilon)}$. 
  So $P(T,g_s)\leq \Theta+\varepsilon$
  for such $s$-values and the claim follows.
\end{proof}

\section{Application: regularity of topological entropy for 
   interval maps with holes}
\label{sec:htop}

We shall now apply the results of the previous section to interval maps with holes and, in particular, prove 
Theorem \ref{theorem:Main} from the introduction.
 In the following, we let $T:\bigcup\Zp \to I$ be a fixed piecewise monotonic $C^1$-map.
The space of holes equipped with
a pseudometric was defined in the introduction, see 
equations (\ref{def:holespace}) and (\ref{def:holedist}).

First we note that the relation between the pressure and the (non-negative) topological entropy 
of the dynamical system with holes is as follows. 
\begin{proposition}
Let $H\in \Ho(I)$. Then 
  $\ds \htop(T,H)=\max\{ P(T, \chi_{I\setminus H}), 0\}$.
\end{proposition}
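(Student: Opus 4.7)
My approach is to exploit case~(2) of Theorem~\ref{thm:bkpressure}, which for weights constant on each element of the initial partition expresses the pressure as $\limsup_n \frac{1}{n}\log \sum_{Z\in\Zp_n}\sup_Z g_n$. Setting $g = \chi_{I\setminus H}$, this sum is essentially a count of surviving $n$-cylinders, which one hopes to identify with $\Card \Zp_n^H$ up to a factor negligible on the exponential scale. The main obstacle is that $g$ is not in general constant on the elements of $\Zp$, since $H$ may cut across partition intervals; to address this I will work with a refined partition.

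Let $\widetilde{\Zp}$ be the partition obtained from $\Zp$ by subdividing each $Z \in \Zp$ at the endpoints of $H$ lying in the interior of $Z$. The refined map $\widetilde T : \bigcup \widetilde{\Zp} \to I$ is still piecewise monotonic $C^1$ (it is literally the same map $T$), and $g$ now takes a constant $\{0,1\}$-value on each $\widetilde Z \in \widetilde{\Zp}$. Crucially, the Ruelle transfer operator $\tr_g$ on $BV(I)$ depends only on $T$, $g$, and the domain $\bigcup \Zp = \bigcup \widetilde{\Zp}$, not on the partition chosen to describe it, so $P(T,g) = P(\widetilde T, g)$, and case~(2) of Theorem~\ref{thm:bkpressure} yields
\[ P(T,g) = \limsup_{n\to\infty} \frac{1}{n} \log \sum_{\widetilde Z \in \widetilde{\Zp}_n} \sup_{\widetilde Z} g_n. \]

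It remains to relate the above sum to $\Card \Zp_n^H$. Since $g_n$ is the indicator of the surviving set $\bigcap_{k=0}^{n-1} T^{-k}(I\setminus H)$, the sum counts those refined cylinders $\widetilde Z$ on which $g_n \equiv 1$. Each such $\widetilde Z$ sits in a unique $Z \in \Zp_n$ with $\sup_Z g_n > 0$, and conversely the surviving $\widetilde Z$'s inside that $Z$ are precisely the connected components of $Z \setminus \bigcup_{k=0}^{n-1} T^{-k}H$. Because $T^k|_Z$ is a homeomorphism onto $T^k Z$ and $H$ consists of at most $N$ intervals, $T^{-k}H \cap Z$ has at most $N$ components, so $Z \setminus \bigcup_k T^{-k}H$ has at most $nN+1$ components. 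Noting that $\Card \Zp_n^H = \Card\{Z \in \Zp_n : \sup_Z g_n > 0\}$, this gives the sandwich
\[ \Card \Zp_n^H \;\leq\; \sum_{\widetilde Z \in \widetilde{\Zp}_n} \sup_{\widetilde Z} g_n \;\leq\; (nN+1)\,\Card \Zp_n^H. \]
The polynomial factor is absorbed on taking $\tfrac{1}{n}\log$. A short case analysis finishes the proof: if $\Card \Zp_n^H \geq 1$ for all $n$, the sandwich forces $\tfrac{1}{n}\log\Card \Zp_n^H \to P(T,g) \geq 0$, so $\htop(T,H) = P(T,g) = \max\{P(T,g), 0\}$; and if $\Card \Zp_n^H = 0$ for some (hence all subsequent) $n$, then $g_n \equiv 0$ on the relevant domain, $\tr_g^n = 0$, $P(T,g) = -\infty$, and $\htop(T,H) = 0 = \max\{P(T,g),0\}$. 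The main point requiring care is the polynomial component bound, which rests on the monotonicity of $T^k|_Z$ for $Z \in \Zp_n$ together with the uniform bound on the number of intervals comprising $H$.
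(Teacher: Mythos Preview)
Your argument is correct and follows essentially the same route as the paper's: identify the pressure sum with $\Card \Zp_n^H$ (up to a subexponential factor) and invoke Theorem~\ref{thm:bkpressure}. The paper does this more directly by observing that $\sup_Z g_n\in\{0,1\}$ for $Z\in\Zp_n$ and that $\sup_Z g_n=1$ precisely when the corresponding element $Z'=Z\cap\bigcap_{k<n}T^{-k}(I\setminus H)$ of $\Zp_n^H$ is non-empty, so that $\sum_{Z\in\Zp_n}\sup_Z g_n=\Card\Zp_n^H$ on the nose, with no refinement and no sandwich needed.

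Your partition refinement is not wasted effort, however: the hypotheses of Theorem~\ref{thm:bkpressure} (either $g$ continuous on each $Z\in\Zp$ with $\Zp$ generating, or $g$ constant on each $Z\in\Zp$) are not in general satisfied by $g=\chi_{I\setminus H}$ on the original partition $\Zp$, since the hole may cut across partition intervals. The paper's citation of that theorem is therefore a little quick, and your passage to $\widetilde{\Zp}$, together with the observation that $\tr_g$ depends only on $T$, $g$ and the common domain, is exactly what makes the appeal legitimate. The polynomial sandwich then reconciles the refined count with $\Card\Zp_n^H$. Your claim that the surviving refined cylinders inside a given $Z\in\Zp_n$ are \emph{precisely} the connected components of $Z\setminus\bigcup_{k<n}T^{-k}H$ is true (any cut point is a preimage of an endpoint of $H$, hence lies in some $T^{-k}H$ since $H$ is closed, so adjacent surviving cells lie in different components), though for the argument it would suffice to bound the number of surviving cells by the total number of refined cells in $Z$, which is at most $2nN+1$.
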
  
\begin{proof}
Given the weight $g=\chi_{I\setminus H}$, it is clear that $g_n$ takes values one or zero only. 
For $Z\in \Zp_n$, we have $\sup_Z g_n=1$ precisely when $Z$ contains a non-empty cylinder for the 
restricted dynamics, say $Z'\in \Zp^H_n$. In fact,
the support of $g_n$ restricted to $Z$ is the closure of $Z'$.
Thus $\sum_{Z\in \Zp_n} \sup_Z g_n= \Card \Zp^H_n$
and by Theorem~\ref{thm:bkpressure} it follows that    
when $\Zp^H_n$ is non-empty for every $n\in \N$ 
\[ 
   P(T, \chi_{I\setminus H})= \limsup_{n\to \infty} \frac{1}{n}
      \log \sum_{Z\in \Zp_n} \sup_Z g_n =
   \limsup_{n\to \infty} \frac{1}{n} \log^+  \Card \Zp^H_n=\htop(T,H)\geq 0\,,
\]
and the assertion follows in this case.  

If on the other hand $\Zp^H_n$ is empty for some $n\in \N$, then $\Zp^H_\nu$ is empty for all $\nu\geq n$, hence  $\htop(T,H)=0$. Moreover, $g_\nu=0$ for all $\nu\geq n$, so $\rho( \tr_g )=0$ and the assertion follows in this case as well. 
\end{proof}

\begin{proof} [Proof of Theorem \ref{theorem:Main}:]

Since the weight $g_s=\chi_{I\setminus H_s}$ takes the values
0 and 1 only,
we observe that
\begin{equation}
  \label{eq:thetazero}
  \Theta(T,\chi_{I\setminus H})
     =0  \ \  (\mbox{or} \  -\infty)
     \quad (\forall H \in \Ho(I))\,,
\end{equation}
\begin{equation}
  \label{eq:disth1h2}
    \norm{g_{s}-g_{t}}{L^1(\bigcup \Zp)}
   = \dist(H_s,H_t)  \quad (\forall H_s, H_t \in \Ho(I))\,,
 \end{equation}
so the family of weights is 
$L^1$-Lipschitz continuous in $s$.
If the number of holes is uniformly bounded by $N<+\infty$ 
and the cardinality of $\Zp$ is $d$ then
the variation norm of $g_s$ is uniformly bounded by $2N+2d$.

We therefore have 
$M(T,g_s,\beta) \leq 1$ whenever $\beta>1$ and $\Gamma(T,g_s) \leq 2N+2d$, 
both uniformly in $s$. The conditions in Theorem \ref{thm:main}
are thus satisfied with $\Theta=\log(1)=0$.
The conclusions follow by applying
 Corollary~\ref{coro:main} to 
  $\htop(T,H_s)=\max\{ P(T, g_s), 0\}$.
\end{proof} 

This result is perhaps a bit surprising, given the rather brutal nature of introducing a hole 
into the dynamics.
Note  that $P(T,g_s)$ may drop discontinuously
from $0$ to $-\infty$, which happens when all points escape. 
This is the reason for defining the topological entropy as in the introduction. 

\section{Examples}
\label{sec:ex}

 \begin{figure}
  \hspace{-5em}
\includegraphics[width=0.15\paperwidth, bb=20 20 575 575]{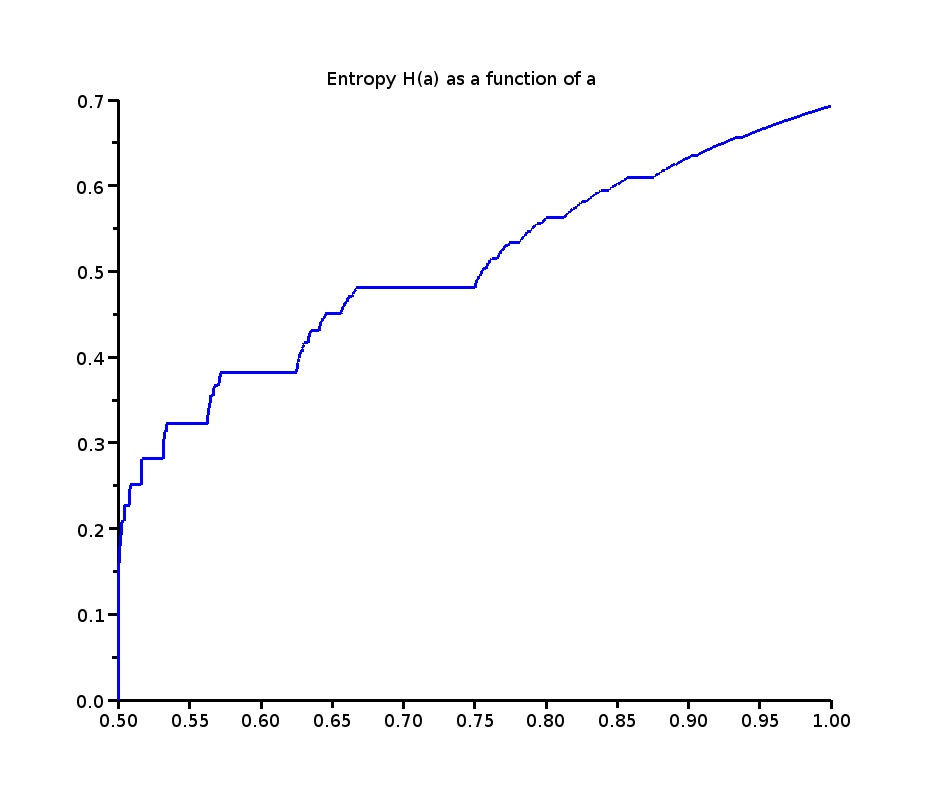} \hspace*{2cm}
\includegraphics[width=0.15\paperwidth, bb=20 20 575 575]{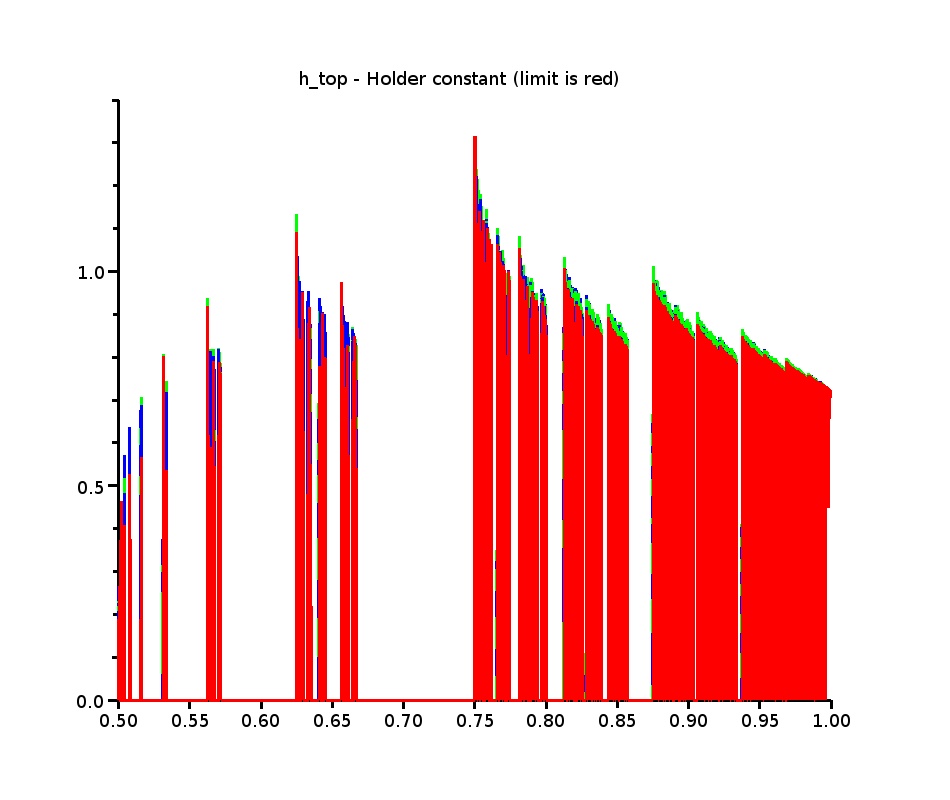}
\caption{Left: The topological entropy of the doubling map with hole $[a,1]$ as a function
  of $a$. 
  Right: Estimated local H\"older constant $C(a)$ as a function of $a$
  assuming $p=1$ using different mesh-sizes.}
\label{fig:1}
 \end{figure}

In this section we will illustrate and compare
H\"older continuity of families of interval
maps numerically and theoretically. 
A non-trivial difficulty in applying our theorems
numerically is that an explicit value for the
H\"older exponent  requires 
knowledge of the order $p$ of the pole 
associated with the leading eigenvalue. 
In our first example below we exhibit a family 
in which the leading  eigenvalue is simple for all 
parameter values, which in particular implies that $p=1$.
We then provide an example exhibiting
a double pole and show how this is reflected in  numerical experiments.

\subsection{Doubling map with left expanding hole}
We consider the map $T(x)=2 x \mod 1$,
 defined on $(0,1/2) \cup (1/2,1)$.
Let $1/2 <a<1$. We will be interested in the topological entropy
of $h(a)=\htop(T,[a,1])$, that is, $T$ restricted to the set $A=(0,1/2) \cup (1/2,a)$.
As shown in the previous section we may
calculate this entropy as the leading eigenvalue
of a transfer operator.  There are essentially two
ways to do so.
The first is to consider the map
$T$ as defined on  $I= (0,1/2)\cup (1/2,1)$, and using as weight $g$
the indicator function of $A$. The transfer operator then takes the form 
\begin{equation*}
  (\tr_a f) (x) = f \left( \frac{x}{2} \right) 
     + \chi_{(0,a)} \left(\frac{1+x}{2}\right) f\left(\frac{1+x}{2}\right)\,.
\end{equation*}
We note
that the map has big images as 
$T^n(Z)=(0,1)$ for every $n\in \N$, $Z\in\Zp_n$. We also have
$\Lambda(T)=\Xi(T)=\log 2$. 
We may apply Theorem~\ref{theorem:Main}
to see that $h$ is 
H\"older continuous at every $a\in (1/2,1)$ with the local H\"older exponent satisfying 
   \begin{equation}
     \Hol(h,a) \geq  \frac{h(a)}{p \log 2}
     \end{equation}
for some $p\in \N$.
        We claim that $p=1$ for each $a\in (1/2,1)$. This is also indicated
        by numerical experiments illustrated in Figure~\ref{fig:1}.
In that figure we show our numerical estimates of 
$h(a)=\htop(T,[a,1])$ on the left,  and on the right the local 
H\"older constant under the assumption that $p=1$ given by 
   \[ C(a)= \limsup_{s\to a} \frac{|h(s)-h(a)|}{(s-a)^{h(a)/\log 2}} \]
The (numerically obtained) limit superior seems to be neither 0 nor infinity,
indicating that our estimate for the local H\"older exponent is {\em optimal}.

Now, in order to prove that $p=1$ 
        it suffices to show that
 the leading eigenvalue is simple.
We shall do so by considering an alternative description of the system,
using a Hofbauer tower (\cite{H}, see also \cite[Section~3]{BK}), 
which we also used to compute our numerical estimates.

We start out by writing $J_0=(0,1/2)$, $J_1=(1/2,a)$, and $J_2=(a,1)$. 
Set ${\mathcal Z} = \{J_0,J_1,J_2\}$ and consider the locally constant weight 
$g=\chi_{J_1\cup J_2}$. 
For simplicity of notation 
we shall in the following tacitly omit intervals arising 
from $J_2$ from the Hofbauer tower
as they will not contribute to the spectral properties of the transfer operator on the tower. 

Instead of working with cylinder sets, that is, intersections of
preimages of the $J_i$'s, the idea is to deal 
with intersections of forward iterates of these intervals.
In our case, this boils down to studying 
the forward orbit of the point $a$.
To this end, we define the sequence $(a_n)_{n\geq 0}$
by setting $a_0=a$ and
 $a_{k+1}=T (\min \{ a, a_k \}^-) \in [0,  a_1]$ for $k\geq 0$, where
 $T(x^-)=\lim_{\xi \uparrow x}T(\xi)$ for $x\in(0,1]$.

The sequence contains finitely many values
if either 
$a_k\geq a$
for some $k$, which happens when
the orbit of $a$ `escapes', 
or if  $a$ is pre-periodic for $T$. Otherwise the orbit is infinite.

Let us first consider the case when
 the orbit is infinite, 
so in particular we have $0<a_k<a$ and $a_k\neq 1/2$
for all $k\geq 1$.
We set $\calD_0=T(J_0)=(0,1)$ and define
the `flats' $\calD_k=(0,a_k)$ for $k\geq 1$.
We have the following `transition' rules giving
rise to a transition matrix $M$ (elements other than those
mentioned all being zero):
\[
   \begin{array} {lcllll}
      \   0 < a_k < 1/2  & :  &
                 \calD_{k+1} &= T(\calD_k \cap J_0) &= (0, a_{k+1}), 
                    & M_{k,k+1}=1,\\
      \  1/2 < a_k < 1  & :  &
                 \calD_{k+1} &= T(\calD_k \cap J_1) &= (0, a_{k+1}),
                   & M_{k,k+1}=1, \\
      \  1/2 \leq  a_k < 1  & :  &
                 \calD_0 &= T(\calD_k \cap J_0) &= (0, 1),
                    & M_{k,0}=1.\\
   \end{array}
 \]
Note that the above lines describe two cases: 
if $a_k < 1/2$, then there is exactly one transition, namely from 
$\calD_k$ to $\calD_{k+1}$; 
if $a_k > 1/2$, then two different transitions are possible, namely the one from 
$\calD_k$ to $\calD_{k+1}$ and an additional one from $\calD_k$ to the bottom flat $\calD_0$.

For later use, let $A= \all{k \geq 0}{a_k \geq 1/2}$ denote the set of indices for which
there is a transition from the $k$-th flat to the bottom flat. 
Note that $0\in A$ and that $A$ contains at least one more index, the latter being a consequence of the fact 
that $T$ is expanding. 

The Hofbauer tower is now defined as the disjoint union 
of flats 
\[ \hatcalD = \all{ (x,\calD_k)}{k \geq 0,\  x\in \calD_k}\,.\]  
With the tower we associate a directed graph $\calG$
obtained
from the transition matrix.
A possible transition graph is sketched in Figure~\ref{fig:graph}.

\begin{figure}
\includegraphics[width=0.5\paperwidth]{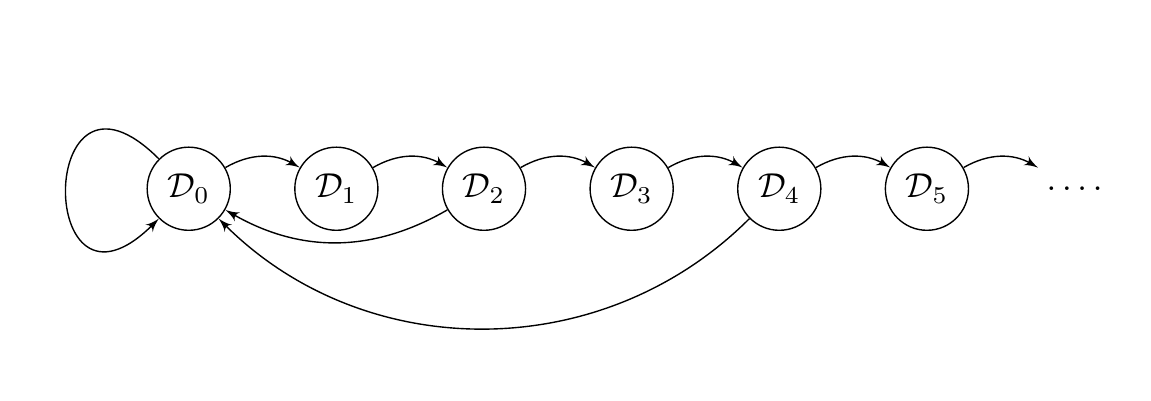}
\caption{The transition graph $\calG$ in the case of $A=\{0,2,4,\ldots\}$}
\label{fig:graph}
 \end{figure}

By \cite[Theorem 2 and Lemma 4.1]{BK}, the multiplicity
of the leading eigenvalue $\lambda$ of the transfer operator $\tr_a$ 
equals the order of the leading pole $z=1/\lambda$
of the zeta-function associated
with the tower, which is given by 
 \[ \zeta(z) = \exp \sum_{k=1} ^\infty \widehat{\zeta}_k \frac{z^k}{k}\,, \]
where $\widehat{\zeta}_k$ is the total number of periodic
points in 
$\calG$ of period $k$ (not necessarily prime).

Now, by Hofbauer
\cite[Theorem 1 as well as Lemmas 3 and 4]{H} this 
periodic orbit counting zeta-function 
has the same 
poles in the unit disk as the
zeros of a determinant calculated in the following way: 
call $\gamma=(i_0 i_1 \cdots i_{n-1})$ a simple cycle if
$M_{i_0 i_1} \cdots M_{i_{n-1}i_0}=1$ and all the $i$'s are distinct.
Also denote by $|\gamma|=n$ the length of such a cycle.
Then 
\begin{equation}
    d(z) = 1 +  \sum 
      \left( - z^{|\gamma_1|} \right)  \cdots
      \left( - z^{|\gamma_q|} \right)  
      \label{eq:hofbauer}
\end{equation}
where the sum is over all 
$q$-tuples (with $q\geq 1$)
of disjoint simple cycles of $\calG$.

When a transition matrix $M$ is of finite size $N\times N$
the formula follows easily from
the standard expansion of
$d(z)=\det(I - z M)$ in terms of permutations
and rewriting permutations as products
over distinct cycles.
In the case of unbounded matrix size
we refer to \cite{H} which explains how to take a limit
of finite matrix truncations, that is, levels in the Hofbauer tower.
Combining the above two results we have: 

\begin{theorem} 
Let $z\in \C$ with $0<\abs{z}<1$. Then $z$ is a zero of $d$ iff
$1/z$ is an eigenvalue of the transfer operator. Moreover, if this is the case the order of the zero
equals the (algebraic) multiplicity of the eigenvalue.
\label{thm:hofbauer}
\end{theorem}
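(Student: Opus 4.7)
The plan is to prove the theorem by composing two identifications, each essentially already established in the cited literature.

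\emph{Step 1: Eigenvalues $\leftrightarrow$ poles of $\zeta$.} Since $g=\chi_{J_1\cup J_2}$ is locally constant with $\Theta(T,g)=0$, Lemma~\ref{lem:ress} gives $\rhoess(\tr_a)\leq 1$, so every $\lam \in \sigma(\tr_a)$ with $\abs{\lam}>1$ is an isolated eigenvalue of finite algebraic multiplicity. I would extend \cite[Theorem~2 and Lemma~4.1]{BK}, stated there only for the leading eigenvalue, to assert that $\lam \mapsto z=1/\lam$ puts these eigenvalues in bijection with the poles of $\zeta(z)$ in the punctured open unit disk, with algebraic multiplicity equalling pole order. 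The BK argument realises $\tr_a$ via the Hofbauer tower $\hatcalD$ and expresses a resolvent trace as the logarithmic derivative of $\zeta$; this identity is local in the spectrum and so applies at every isolated $\lam$ outside $\rhoess(\tr_a)$, not merely at the leading one.

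\emph{Step 2: Poles of $\zeta$ $\leftrightarrow$ zeros of $d$.} I would then invoke \cite[Theorem~1, Lemmas~3 and 4]{H} to identify the poles of $\zeta$ in $\abs{z}<1$, with their orders, with the zeros of $d(z)$. At each finite truncation $\calG_N$ of $\calG$ to the first $N$ tower levels, the transition matrix $M_N$ is finite, and the standard permutation expansion gives both the truncated version of \eqref{eq:hofbauer} for $d_N$ and the identity $d_N(z)=1/\zeta_N(z)$; the correspondence is therefore trivial at each finite level. Passing to the limit $N\to \infty$ is Hofbauer's contribution: sub-exponential growth of the cycle counts on $\calG$ combined with meromorphic continuation yields locally uniform convergence $d_N\to d$ on the open unit disk, and a Hurwitz-type argument then transports zero sets and orders across the limit.

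Composing Steps 1 and 2 gives the theorem. \emph{The main obstacle} I expect is the careful control of the $N\to \infty$ limit in Step 2: one must verify that no zero of $d$ accumulates on the unit circle and that none appears or disappears under truncation. The uniform bounds of Section~\ref{sec:setup} applied to the tower (exploiting $\Theta(T,g)=0$ and $\Xi(T)=\log 2$) give uniform essential spectral radius estimates across all truncations; together with the strict separation between $\rhoess(\tr_a)\leq 1$ and $\abs{\lam}>1$, this is exactly the gap that Hurwitz's theorem needs.
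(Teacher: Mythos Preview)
Your proposal is correct and follows exactly the paper's approach: the paper presents Theorem~\ref{thm:hofbauer} as an immediate consequence of the two cited results (\cite[Theorem~2 and Lemma~4.1]{BK} for the eigenvalue--zeta correspondence, and \cite[Theorem~1, Lemmas~3 and~4]{H} for the zeta--determinant correspondence), with no further argument beyond ``Combining the above two results we have''. Your write-up is in fact more detailed than the paper's, since you explicitly address why the Baladi--Keller identification, though phrased in the paper only for the leading eigenvalue, extends to every isolated eigenvalue outside the essential spectral radius, and you articulate the Hurwitz-type limiting argument that underlies Hofbauer's passage from finite truncations to the full tower.
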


In order to apply the theorem above to the present situation we requite a simple lemma on the nature of the zeros closest to the origin of certain power series. 

\begin{lemma}
\label{lem:dlem}
Let $(m_k)_{k\geq 1}$ denote a sequence with $m_k\in \set{0,1}$ such that 
 $m_k=1$ for at least one $k\geq 1$. Then 
\[ \delta(z)=1-z-\sum_{k=1}^\infty m_kz^{k+1}\]
is holomorphic in the open unit disk. Moreover,
there is a unique $r\in (0,1)$ which is a simple zero of $\delta$ and all other zeros of $\delta$ are strictly 
larger in modulus. 
 \end{lemma}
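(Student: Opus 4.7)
The plan is to verify holomorphy, establish existence and uniqueness of the real zero $r\in(0,1)$, check that it is simple, and finally show that every other zero of $\delta$ has strictly larger modulus. Holomorphy in the open unit disk is immediate: since $m_k\in\{0,1\}$, the series $\sum_{k\geq 1} m_k z^{k+1}$ is dominated by the geometric series $\sum z^{k+1}$, hence has radius of convergence at least $1$, and adding the polynomial $1-z$ preserves holomorphy.

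For the real zero, I would restrict to $x\in[0,1)$ and note $\delta(0)=1$. Picking an index $k_0$ with $m_{k_0}=1$ (which exists by hypothesis), the estimate $\delta(x)\leq 1-x-x^{k_0+1}$ shows that $\delta(x)$ becomes negative as $x\uparrow 1$. Differentiating termwise,
\[
 \delta'(x)=-1-\sum_{k\geq 1}(k+1)m_k x^{k}\leq -1\quad(x\in[0,1)),
\]
so $\delta$ is strictly decreasing on $[0,1)$; hence a unique real zero $r\in(0,1)$ exists, and $\delta'(r)\leq -1\neq 0$ shows that $r$ is simple.

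The main point is to show that any other zero $z_0$ satisfies $|z_0|>r$. From $\delta(z_0)=0$ one has $1 = z_0+\sum_{k\geq 1} m_k z_0^{k+1}$, and the triangle inequality yields
\[
 1 \;\leq\; |z_0|+\sum_{k\geq 1} m_k |z_0|^{k+1} \;=\; 1-\delta(|z_0|),
\]
so $\delta(|z_0|)\leq 0$. Since $\delta$ is strictly decreasing on $[0,1)$ with $\delta(r)=0$, this forces $|z_0|\geq r$. The delicate step is the equality case $|z_0|=r$: equality in the triangle inequality forces all non-zero terms in the sum $z_0+\sum m_k z_0^{k+1}$ to be non-negative real multiples of a common unit complex number, and since the sum itself equals the positive real number $1$, every such term must have argument $0$. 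In particular $z_0>0$, so together with $|z_0|=r$ we conclude $z_0=r$.

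The only real obstacle is the equality-in-triangle-inequality argument, but as soon as one observes that the sum is the positive real number $1$, the common argument of every non-zero summand is pinned down to $0$ and the conclusion follows cleanly.
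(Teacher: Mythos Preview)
Your proof is correct and follows essentially the same route as the paper: holomorphy via comparison with the geometric series, the intermediate value theorem plus strict monotonicity on $[0,1)$ for existence, uniqueness and simplicity of the real zero $r$, and the triangle inequality (with its equality case) to exclude any other zero of modulus $\leq r$. The paper packages the last step as the single inequality $|\delta(z)|\geq \delta(|z|)$ with equality iff $z\in[0,1)$, which is exactly your triangle-inequality argument after rearranging $\delta(z_0)=0$ to $1=z_0+\sum_k m_k z_0^{k+1}$.
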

\begin{proof}
By the Cauchy-Hadamard Theorem, the function $\delta$ is holomorphic in the open unit disk. Furthermore, 
we have $|\delta(z)| \geq 1 - \abs{z} - \sum_{k=1}^\infty m_k |z |^{k+1} =
\delta(|z|)$ with equality iff $z$ is real and positive.
Since $\delta(0)=1$ and $\delta(z)<0$ for $z$ real and sufficiently close to $1$ there is  
$r\in (0,1)$ with $d(r)=0$. 
Moreover, since 
$|\delta(\lambda)|>\delta(|\lambda|)$ when $\lambda \notin [0,1)$,
any other zero must be of absolute value strictly larger
than $r$, and noting that $d'(r)=-1-\sum_{k=1}^\infty (k+1) m_k r^k <0$ the order of the
zero must be one. 
\end{proof}

Returning to our specific case when the orbit of $a$ is infinite
we obtain the following. 

\begin{proposition}
   Suppose that the orbit $(a_k)_{k\geq 0}$ is
   infinite. Then the Hofbauer determinant
      (\ref{eq:hofbauer})
   is given by 
  \begin{equation*}
     d(z) = 1 -  \sum_{k=0}^\infty M_{k,0} \, z^{k+1}
  \end{equation*}
  and is holomorphic in the open unit disk. 
  There is a unique $r\in (0,1)$ which is a simple zero of $d$
  and all other zeros of $d$ are of strictly larger modulus.
  The spectral radius of the transfer operator equals $1/r$ and
  is a simple eigenvalue. 
 All other spectral values are strictly smaller in modulus than $1/r$.
\end{proposition}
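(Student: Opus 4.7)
The plan is to analyze the structure of the directed graph $\calG$, extract $d(z)$ by enumerating its simple cycles, apply Lemma~\ref{lem:dlem} to locate the dominant zero, and then transfer the conclusion to the spectrum of $\tr_a$ via Theorem~\ref{thm:hofbauer}.

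First I would enumerate the simple cycles of $\calG$. The transition rules make every vertex $\calD_k$ emit an edge to $\calD_{k+1}$ and, if $k\in A$, additionally an edge back to the bottom flat $\calD_0$. In particular, the only backward edges all terminate at $\calD_0$. Consequently, a simple cycle must use exactly one such edge $\calD_k\to\calD_0$ with $k\in A$, preceded by the ladder traverse $\calD_0\to\calD_1\to\cdots\to\calD_k$; any other structure either fails to close or revisits $\calD_0$. Hence the simple cycles are exactly $\gamma_k=(\calD_0,\calD_1,\ldots,\calD_k)$ for $k\in A$, with $|\gamma_k|=k+1$. Since any two distinct simple cycles share the vertex $\calD_0$, there are no $q\geq 2$ tuples of pairwise disjoint simple cycles, so (\ref{eq:hofbauer}) collapses to
\[
d(z)=1-\sum_{k\in A}z^{k+1}=1-\sum_{k=0}^{\infty}M_{k,0}\,z^{k+1},
\]
which is holomorphic in the open unit disk since $M_{k,0}\in\{0,1\}$.

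Next I would pin down the leading zero using Lemma~\ref{lem:dlem}. Because $0\in A$ one has $M_{0,0}=1$ (accounting for the $-z$ term), and because $A$ contains at least one further index $k\geq 1$ by the expansivity remark already established, the sequence $m_k:=M_{k,0}$ for $k\geq 1$ satisfies the hypothesis of Lemma~\ref{lem:dlem}. The lemma then yields a unique $r\in(0,1)$ which is a simple zero of $d$, with every other zero of strictly larger modulus. Applying Theorem~\ref{thm:hofbauer} converts this into the statement that $1/r>1$ is a simple eigenvalue of $\tr_a$, and that every other eigenvalue of modulus greater than $1$ has modulus strictly less than $1/r$. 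To exclude spectral values inside the closed unit disk from dominating, I invoke Lemma~\ref{lem:ress}: since the weight $g=\chi_{J_1\cup J_2}$ takes only the values $0$ and $1$ one has $\Theta(T,g)\leq 0$, hence $\rho_{\mathrm{ess}}(\tr_a)\leq 1<1/r$, so no spectral point in $\{|\lambda|\leq 1\}$ can equal or exceed $1/r$ in modulus. Combining, $\rho(\tr_a)=1/r$ is attained as a simple eigenvalue and strictly dominates the rest of the spectrum.

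The main obstacle is the combinatorial step of ensuring that no exotic simple cycles exist in $\calG$---for instance, cycles that attempt to wind up and down the ladder several times. This is prevented precisely by the observation that every backward edge of $\calG$ lands at the single vertex $\calD_0$, so a simple cycle (visiting each vertex at most once) can cross such an edge exactly once. Once this combinatorial fact is secured, the remainder of the proof is a routine assembly of the zeta/transfer-operator correspondence (Theorem~\ref{thm:hofbauer}), the zero-location lemma for $\delta$-type power series (Lemma~\ref{lem:dlem}), and the essential spectral radius bound (Lemma~\ref{lem:ress}).
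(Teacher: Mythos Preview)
Your proof is correct and follows essentially the same route as the paper: identify that every simple cycle in $\calG$ must pass through $\calD_0$ (hence no disjoint tuples survive in (\ref{eq:hofbauer})), read off the formula for $d$, then apply Lemma~\ref{lem:dlem} and Theorem~\ref{thm:hofbauer}. Your explicit invocation of Lemma~\ref{lem:ress} to control the spectrum inside the closed unit disk is a welcome clarification that the paper leaves implicit.
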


\begin{proof}
Cycles are to be distinct in the above sum and
{\em any} cycle in our transition graph has to contain $\calD_0$ so 
there are no two distinct non-trivial cycles.
Since every cycle
is of the form $(\calD_0\calD_1 \cdots \calD_{k})$  with $M_{k,0}=1$
we immediately obtain the expression for the Hofbauer determinant.

The remaining assertions now follow from Theorem~\ref{thm:hofbauer} and Lemma~\ref{lem:dlem} by 
observing that $d(z)=1-\sum_{k\in A}^\infty z^{k+1}$, that $0\in A$ and that $A\setminus \set{0}$ is 
non-empty. 
\end{proof}

We now turn to the case where the Hofbauer tower is finite. Recall that this happens precisely 
if $a$ is pre-periodic for $T$ or if the orbit escapes. 
In either case there are $N$ and $1\leq j \leq N$ with $a_{N+1}=a_j$;  note that if the orbit escapes 
we have $a_{N+1}=a_1$.  
Thus there is no element $M_{N,N+1}$ but we have the additional transition
$M_{N,j}=1$.

We then have the following simple cycles:

\begin{enumerate}
\item For $0\leq k\leq N$ with $M_{k,0}=1$, 
   $(\calD_0 \calD_1 \cdots \calD_k)$ is of length $k+1$.
\item One additional cycle of the form 
   $(\calD_j  \cdots \calD_N)$ and of length $N-j+1$.
\end{enumerate}

In this case there is also the possibility of disjoint $2$-tuples
for each $0\leq k <j$ with $M_{0,k}=1$ of the form
  \begin{equation}
   \{(\calD_0 \calD_1 \cdots \calD_k),
   (\calD_j  \cdots \calD_N) \}.
  \end{equation}

The Hofbauer determinant
      (\ref{eq:hofbauer})
then takes the form
\begin{align*}
  d(z) =& 1 - 
        z^{N-j+1}  -
        \sum_{k=0}^N M_{k,0} \, z^{k+1} 
            + \sum_{k=0}^{j-1}  M_{k,0} \, z^{k+1} z^{N-j+1} \nonumber \\
     =& 
       \left(1 - z^{N-j+1}\right)  
        \left( 1 - 
             \sum_{k=0}^{j-1} M_{k,0} \,  z^{k+1} - 
              \left(1 - z^{N-j+1}\right)^{-1} 
             \sum_{k=j}^{N} M_{k,0} \,  z^{k+1}  \right) \nonumber \\
     =& 
       \left(1 - z^{N-j+1}\right)  
     \left( 1 -  \sum_{k=0}^{\infty}
        \widetilde{M}_{k,0} \,  z^{k+1} \right)
 \nonumber \\
\end{align*}
where 
\begin{equation}
\label{eq:tildeM}
\widetilde{M}_{k,0} = 
 \begin{cases} 
    M_{k,0} & \text{for $0\leq k<j$,}\\
    M_{j+(k-j \mod (N-j+1)),0} & \text{for $k\geq j$.} 
    \end{cases}
\end{equation}
We may thus conclude:
\begin{proposition}
   Suppose that the orbit $(a_k)_{k\geq 0}$ is
   finite. Then the Hofbauer determinant
      (\ref{eq:hofbauer})
   is given by 
  \begin{equation*}
    d(z) = 
     \left(1 - z^{N-j+1}\right)  
     \left( 1 -  \sum_{k=0}^{\infty}
        \widetilde{M}_{k,0} z^{k+1} \right )
  \end{equation*}
for some $N$ and $j$ with $1\leq j \leq N$ and  $\widetilde{M}_{k,0}$ gven by (\ref{eq:tildeM}). 
The function $d$ is holomorphic in the open unit disk. 
  Moreover, there is a unique $r\in (0,1)$ which is a simple zero of $d$
  and all other zeros of $d$ are of strictly larger modulus.
  The spectral radius of the transfer operator equals $1/r$ and
  is a simple eigenvalue. 
 All other spectral values are strictly smaller in modulus than $1/r$.
\end{proposition}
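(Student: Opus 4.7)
The plan is to derive the claimed product form for $d(z)$ by explicit enumeration of simple cycles and disjoint cycle tuples in the transition graph $\calG$, and then apply Lemma~\ref{lem:dlem} together with Theorem~\ref{thm:hofbauer} to read off the spectral data.

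First I would enumerate the simple cycles in $\calG$. The only edges in $\calG$ are $\calD_k\!\to\!\calD_{k+1}$ for $0\le k<N$, the wrap edge $\calD_N\!\to\!\calD_j$, and the drop edges $\calD_k\!\to\!\calD_0$ for $k\in A=\{0\le k\le N:a_k\ge 1/2\}$. A cycle through $\calD_0$ cannot use the wrap edge without revisiting some $\calD_i$ with $j\le i\le N$, so such cycles are exactly $(\calD_0\calD_1\cdots\calD_k)$ of length $k{+}1$, one for each $k\in A$. Conversely, a cycle avoiding $\calD_0$ uses no drop edge and must eventually use the wrap, giving the single cycle $(\calD_j\calD_{j+1}\cdots\calD_N)$ of length $N{-}j{+}1$. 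Since every cycle of the first kind contains $\calD_0$, the only nontrivial disjoint tuples are $2$-tuples pairing the wrap cycle with a cycle $(\calD_0\cdots\calD_k)$ satisfying $k\in A$ and $k<j$. Substituting into (\ref{eq:hofbauer}) reproduces the expression for $d(z)$ displayed just above the proposition.

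Next I would check the factorisation and apply Lemma~\ref{lem:dlem}. Expanding $(1-z^{N-j+1})^{-1}=\sum_{m\ge 0}z^{m(N-j+1)}$, the tail $\sum_{k=j}^{N}M_{k,0}z^{k+1}/(1-z^{N-j+1})$ rewrites as $\sum_{k\ge j}\widetilde M_{k,0}\,z^{k+1}$ under the substitution $k\mapsto j+((k{-}j)\bmod(N{-}j{+}1))$ that periodically copies the finite pattern $(M_{j,0},\ldots,M_{N,0})$, matching (\ref{eq:tildeM}). Combined with the unchanged head this produces the second factor $1-\sum_{k\ge 0}\widetilde M_{k,0}z^{k+1}$, whose radius of convergence is at least $1$ since $\widetilde M_{k,0}\in\{0,1\}$, and thus $d$ is holomorphic on the open unit disk. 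Writing this factor as $1-\widetilde M_{0,0}z-\sum_{k\ge 1}\widetilde M_{k,0}z^{k+1}$, one has $\widetilde M_{0,0}=M_{0,0}=1$ because $a_0=a>1/2$, and the expansivity of $T$ (which forces $A\setminus\{0\}\ne\emptyset$, as noted right after the definition of $A$) ensures some $\widetilde M_{k,0}=1$ with $k\ge 1$. Lemma~\ref{lem:dlem} then yields a unique $r\in(0,1)$ which is a simple zero of the second factor, every other zero having strictly larger modulus. Since the zeros of $1-z^{N-j+1}$ all lie on the unit circle, $r$ remains the unique minimal-modulus zero of $d$ and is simple.

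To conclude, Theorem~\ref{thm:hofbauer} transfers the zero structure of $d$ in the open unit disk into the part of the spectrum of $\tr_a$ lying outside the closed unit disk, preserving multiplicities. By Lemma~\ref{lem:ress} the essential spectral radius is bounded by $\exp(\Theta(T,g))\le 1$, whereas $1/r>1$, so the spectral radius equals $1/r$, this value is a simple eigenvalue, and every other spectral value has strictly smaller modulus. The only real obstacle is the combinatorial bookkeeping: one must carefully verify that no further simple cycles or higher disjoint tuples exist beyond those listed, and that the geometric re-summation index exactly produces the periodic pattern in (\ref{eq:tildeM}); everything after that is an essentially mechanical application of the lemma and theorem.
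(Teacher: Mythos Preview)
Your proposal is correct and follows essentially the same approach as the paper: the paper derives the factorised form of $d(z)$ by exactly the cycle enumeration and geometric re-summation you describe (this is the ``calculation above'' referenced in its one-line proof), and then invokes Lemma~\ref{lem:dlem} and Theorem~\ref{thm:hofbauer} to finish. Your argument is in fact slightly more complete than the paper's, since you explicitly justify why cycles through $\calD_0$ cannot use the wrap edge, why $\widetilde M_{0,0}=1$ and some later $\widetilde M_{k,0}=1$, and why the essential spectral radius bound rules out further spectrum of modulus at least $1/r$.
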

\begin{proof}
This follows from the calculation above together with Theorem~\ref{thm:hofbauer}
and Lemma~\ref{lem:dlem}. 
\end{proof}
Summarising, we have shown the following. 
\begin{corollary}
For any $a\in (1/2,1)$ the transfer operator $\tr_a$ has a leading eigenvalue which is simple. In particular, the topological entropy $h(a)$ of the doubling map with hole $[a,1]$ satisfies 
\[   \Hol(h,a) \geq  \frac{h(a)}{\log 2}\,.\]
\end{corollary}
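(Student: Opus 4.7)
The plan is to assemble this corollary directly from the two preceding propositions together with Theorem~\ref{theorem:Main}, so the argument is essentially a bookkeeping step rather than new work. First I would observe that for fixed $a \in (1/2,1)$ the forward orbit $(a_k)_{k\geq 0}$ is either infinite or finite, and these two exhaustive cases are treated by the two propositions immediately above. In either case we conclude that the transfer operator $\tr_a$, realised on the Hofbauer tower, has a spectral radius $1/r$ which is a simple eigenvalue, with every other spectral value strictly smaller in modulus. In particular, $\exp(h(a)) = 1/r$ is a simple pole of the resolvent of $\tr_a$, so $p = 1$.

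Next I would verify the hypotheses of Theorem~\ref{theorem:Main} for the family $H_a = [a,1]$. Each $H_a$ is a single closed subinterval of $[0,1]$, so the number of holes is uniformly bounded by $1$. Moreover, $\dist(H_a, H_b) = m([a,b] \triangle [b,a]) = |a-b|$, so $a \mapsto H_a$ is (even isometrically) Lipschitz in $a \in (1/2,1)$. The doubling map $T(x) = 2x \bmod 1$ satisfies $T^n(Z) = (0,1)$ for every $Z \in \Zp_n$, hence $m(T^nZ) = 1$ uniformly, and $|D(T^n)(x)| = 2^n$ everywhere. This gives $\Xi_n(T) = \log 2$ for all $n$, so $0 < \Xi(T) = \log 2 < +\infty$.

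Finally, since $h(a) > 0$ for every $a \in (1/2,1)$ (which follows from the presence of a non-trivial cycle $0 \in A$ plus at least one further element of $A$, yielding a positive spectral radius exceeding the essential one), Theorem~\ref{theorem:Main} applies and delivers
\[
  \Hol(h, a) \geq \frac{h(a)}{p \cdot \Xi(T)} = \frac{h(a)}{\log 2},
\]
since we have just shown $p = 1$ and $\Xi(T) = \log 2$. The only point that is not entirely routine is keeping track of the translation between the two models of the system (the transfer operator on $BV(I)$ used in Theorem~\ref{theorem:Main} versus the Hofbauer-tower operator used in the propositions), but by \cite[Theorem 2 and Lemma 4.1]{BK} the orders of the leading poles agree, so simplicity transfers without loss. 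This is the only mild obstacle; everything else is a direct substitution of constants.
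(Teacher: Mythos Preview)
Your proposal is correct and matches the paper's approach exactly: the paper gives no separate proof of this corollary, merely writing ``Summarising, we have shown the following'', since it is nothing more than the combination of the two preceding propositions (covering the infinite- and finite-orbit cases, each yielding a simple leading eigenvalue) with Theorem~\ref{theorem:Main} and the computation $\Xi(T)=\log 2$ made earlier in the section. One harmless slip: you wrote $m([a,b]\triangle[b,a])$ where you meant $m([a,1]\triangle[b,1])=|a-b|$.
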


\subsection{Doubling map with hole giving rise to a double pole}
One of the dynamically simplest examples exhibiting a double pole
of the resolvent is again furnished by the doubling map 
$T(x)=2x \mod 1$. This time, however, we introduce
a hole from $a$ to $a+1/12$, that is, we shall consider 
$\htop(T,[a,a+1/12])$ as a function of $a$. 
For the particular
value $a=3/4$ the hole is 
from $3/4$ to $5/6$.
Figure~\ref{fig:2} shows this map indicating orbits of boundary
points. 

\begin{figure}
 \includegraphics[width=0.3\paperwidth, bb=20 20 575 575 ]{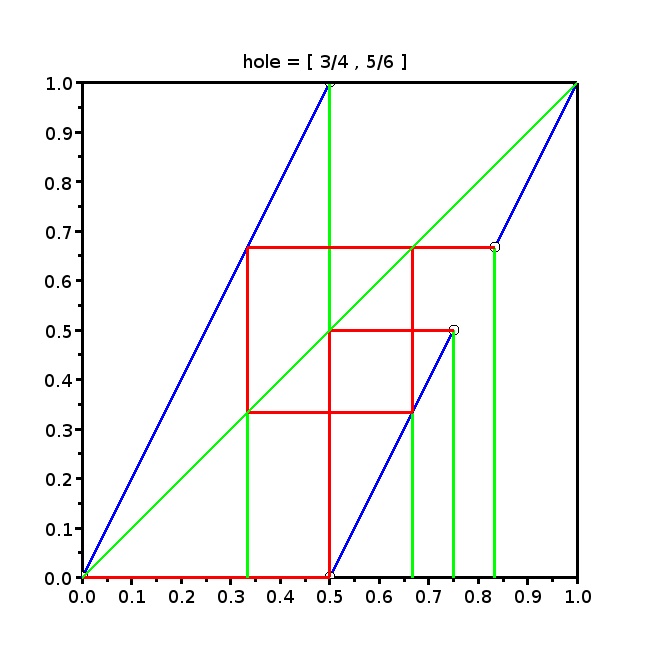} 
\caption{The doubling map with hole $[3/4,5/6]$ showing orbits of the hole boundaries.}
 \label{fig:2}
 \end{figure}

The estimated entropy $\htop(T,[a,a+1/12])$ 
as a function of $a$ is shown in Figure~\ref{fig:3} on the left, exhibiting a `dip' at $a=3/4$. 
In order to understand this dip, note that
the intervals $I_1=(0,1/2)$, $I_2=(1/2,3/4)$, and $I_3=(5/6,1)$ make up 
the initial partition. 
Introducing $I_4=(2/3,3/4)$ and $I_5=(1/3,1/2)$
we have the following transitions
(see Figure \ref{fig:double} for the corresponding transition graph):
\[ \ncircle{1} \to \text{\ncircle{1}, \ncircle{2} or \ncircle{3}}\,; 
   \quad \ncircle{2} \to \ncircle{1}\,; 
   \quad \ncircle{3} \to \text{\ncircle{3} or \ncircle{4}}\,; 
   \quad \ncircle{4} \to \ncircle{5}\,; 
   \quad \ncircle{5} \to \text{\ncircle{3} or \ncircle{4}}\,.\]

\begin{figure}[h]
\includegraphics[width=0.5\paperwidth]{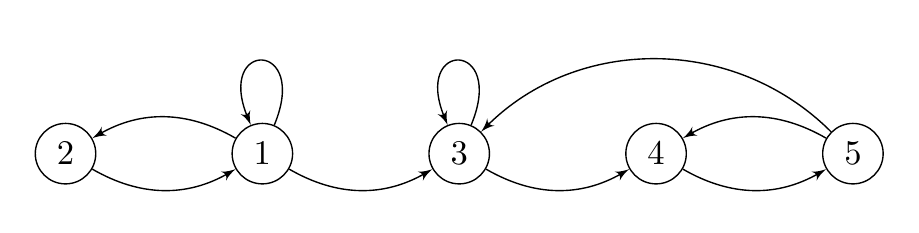}
\caption{The transition graph $\calG$ in the double pole case}
\label{fig:double}
 \end{figure}

One verifies  that $\gamma=(1+\sqrt{5})/2$ is
a double eigenvalue of the corresponding transition
matrix $\pi$ (see Figure~\ref{fig:3} right)
and that 
$\ker(\pi - \gamma I)$ is one-dimensional.
This implies that the resolvent has a double pole at $\gamma$.
 Theorem  \ref{theorem:Main} applied at
 $a=3/4$ shows that the entropy $h(a)=\htop(T, [a,a+1/12])$
 is H\"older continuous with a local exponent 
 at least $ \log(\gamma)/(2 \log 2) \approx 0.3471$ which is
 consistent with a numerical estimate for the exponent (not shown).

 \begin{figure}[h]
 \vspace{4em}
\hspace{-5em}
\includegraphics[width=0.15\paperwidth, bb=20 20 575 575]{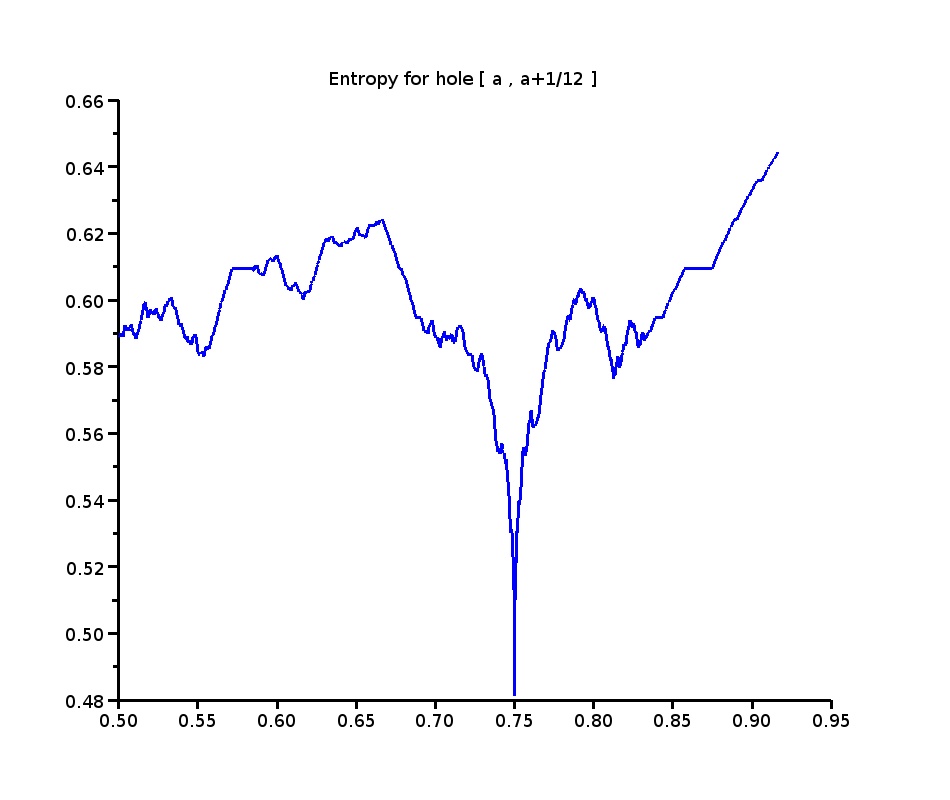} \hspace*{2cm}
\includegraphics[width=0.15\paperwidth, bb=20 20 575 575]{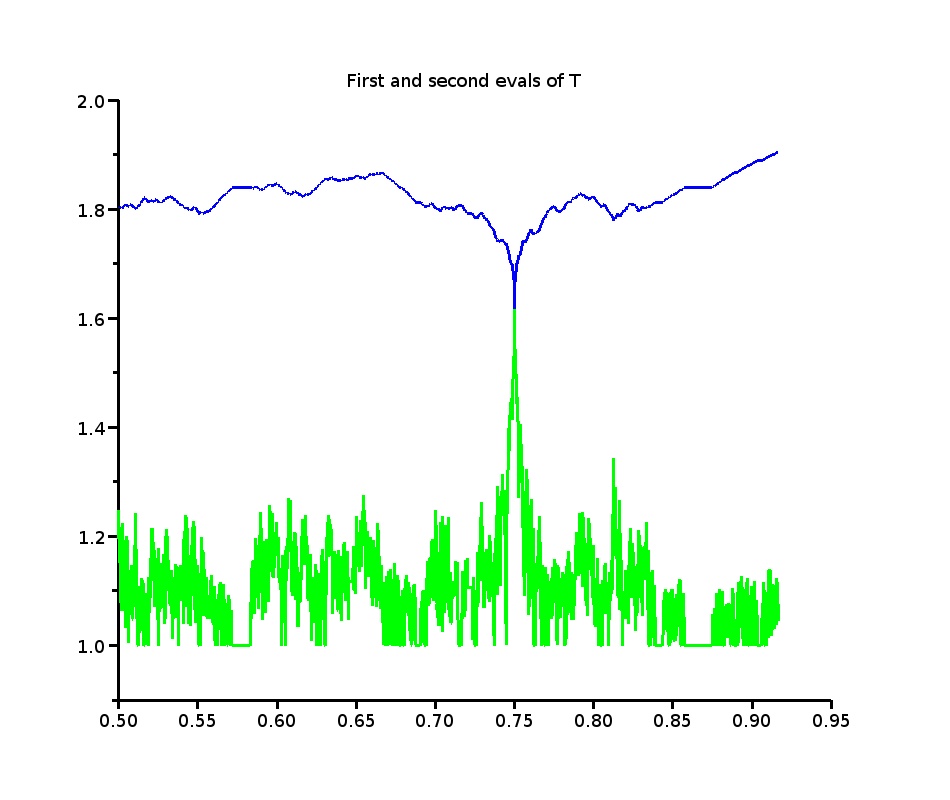} \ \ \ 
\caption{Left: the topological $\htop(T,[a,a+1/12])$ as a function of $a$. 
     Note the dip for $a=3/4$.
  Right: the two largest eigenvalues of the transfer operator.}
\label{fig:3}
 \end{figure}

\section*{Acknowledgements}
The research in this article was carried out while the first author was visiting the D\'epartement de 
Math\'ematiques d' at the Universit\'e Paris-Sud during research leave from Queen Mary, University of
London. Both authors are grateful to Viviane Baladi, Gerhard Keller and Henk Bruin for valuable feedback 
during the preparation of this article, as well as to an anonymous referee, whose comments led to a 
considerable simplification of the arguments in Section~\ref{sec:ex}.

\end{document}